\newtheorem{theorem}{Theorem}
\newtheorem{lemma}[theorem]{Lemma}
\newenvironment{proof}[1][Proof]{\textbf{#1.} }{\ \rule{0.5em}{0.5em}}
\begin{document}

\author{ Pelin G. Geredeli \thanks{%
email address: peling@iastate.edu.} \\
Department of Mathematics\\
Iowa State University, Ames-IA, USA}
\title{Asymptotic Stability of a Compressible Oseen-Structure Interaction via a Pointwise
Resolvent Criterion }
\maketitle

\begin{abstract}
In this study, we consider a linearized compressible flow structure
interaction PDE model for which the interaction interface is under the
effect of material derivative term. While the linearization takes place
around a constant pressure and density components in structure equation, the
flow linearization is taken with respect to a non-zero, fixed, variable
ambient vector field. This process produces extra ``convective derivative" and ``material derivative" terms which causes the coupled system to be nondissipative.

We analyze the long time dynamics in the sense of asymptotic (strong)
stability in an invariant subspace (one dimensional less) of the entire
state space where the continuous semigroup is ``\textit{uniformly bounded}".
For this, we appeal to the pointwise resolvent condition introduced in \cite%
{CT} which avoids many technical complexity and provides a very clean, short
and easy-to-follow proof.

\vskip.3cm \noindent \textbf{Key terms:} Flow-structure interaction,
compressible flows, stability, resolvent, uniformly bounded semigroup,
material derivative
\end{abstract}


\section{ Introduction}

\hspace{0.4cm} The mathematical analysis of fluid structure interaction
(FSI) problems constitutes a broad area of research with applications in
aeroelasticity, biomechanics and fluid dynamics \cite{clark, dvorak,
george1, george2, pelin-george, agw, agm, material, T1, T2, ALT, lorena,
clw, cr, igor, igor2, p1, sima, canic}. Such interactive dynamics between
flow/fluid and a plate (or shell) are mathematically realized by coupled PDE
systems with compressible flow and elastic plate components. The analysis of
these PDE systems is considered from many points of view \cite{pelin-george,
agw,material,igor,p1,sima}.

In this work, we consider a linearized flow-structure PDE model with respect
to some reference state which results in the appearance of an arbitrary
spatial flow field. In contrast to the incompressible case, having a
compressible flow component presents a great many difficulties due to the
increase in the number of unknown variables. By the nature and physics of
compressible flows, density can change by pressure forces and a new set of
governing equations are necessarily derived along with the equations for the
conservation of mass and momentum. These equations should be valid for the
flows (compressible) whose range of Mach number is 
\begin{equation*}
MachNumber=M=\frac{\text{velocity}}{\text{local speed of sound}}>0.3.
\end{equation*}%
The cases $M<0.3$ and $0.3<M<0.8$ are subsonic/incompressible and
subsonic/compressible regimes, respectively. Compressible flows can be
either transonic $(0.8<M<1.2)$ or supersonic $(1.2<M<3.0)$. In supersonic
flows, pressure effects are only transported downstream; the upstream flow
is not affected by conditions downstream.

Our principle aim is to consider the long-time behavior of the corresponding
coupled FSI system with a focus of (asymptotic) strong stability properties
of the $C_{0}-$semigroup generated by the solution. This asymptotic decay
for solutions of the compressible flow-structure
PDE model will be stated within the context of the associated semigroup
formulation and ``frequency domain" approach.

\vspace{0.5cm} \noindent \textbf{The FSI Geometry}\newline

Let the flow domain $\mathcal{O} \subset \mathbb{R}^{3}$ with Lipschitz
boundary $\partial \mathcal{O}$. We assume that $\partial \mathcal{O}=%
\overline{S}\cup \overline{\Omega } $, with $S\cap \Omega =\emptyset $, and
the (structure) domain $\Omega \subset \mathbb{R}^{3}$ is a \emph{flat}
portion of $\partial \mathcal{O}$ with $C^2-$ boundary. In particular, $%
\partial \mathcal{O}$ has the following specific configuration: 
\begin{equation}
\Omega \subset \left\{ x=(x_{1,}x_{2},0)\right\} \,\text{\ and \ surface }%
S\subset \left\{ x=(x_{1,}x_{2},x_{3}):x_{3}\leq 0\right\} \,.  \label{geo}
\end{equation}

Additionally, the flow domain $\mathcal{O}$ should be curvilinear polyhedral
domain which satisfies the following conditions:

\begin{itemize}
\item Each corner of the boundary $\partial \mathcal{O}$ -if any- is
diffeomorphic to a convex cone, \label{g1}

\item Each point on an edge of the boundary $\partial \mathcal{O}$ is
diffeomorphic to a wedge with opening $<\pi.$ \label{g2}
\end{itemize}

We note that these additional conditions on the flow domain $\mathcal{O}$
are necessary for the application of some elliptic regularity results for
solutions of second order boundary value problems on corner domains \cite%
{dauge, Dauge_3}. We denote the unit outward normal vector to $\partial 
\mathcal{O}$ by $\ \mathbf{n}(\mathbf{x})$ where $\mathbf{n|}_{\Omega
}=[0,0,1]$, and the unit outward normal vector to $\partial \Omega$ by $%
\mathbf{\nu}\mathbf{(x)}$. Some examples of geometries can be seen in Figure
1.\newline

\begin{figure}[]
\begin{subfigure}[H]{0.3\linewidth}
\centering
\begin{tikzpicture}[scale=0.9]
\draw[left color=black!10,right color=black!20,middle
color=black!50, ultra thick] (-2,0,0) to [out=0, in=180] (2,0,0)
to [out=270, in = 0] (0,-3,0) to [out=180, in =270] (-2,0,0);

\draw [fill=black!60, ultra thick] (-2,0,0) to [out=80,
in=205](-1.214,.607,0) to [out=25, in=180](0.2,.8,0) to [out=0,
in=155] (1.614,.507,0) to [out=335, in=100](2,0,0) to [out=270,
in=25] (1.214,-.507,0) to [out=205, in=0](-0.2,-.8,0) [out=180,
in=335] to (-1.614,-.607,0) to [out=155, in=260] (-2,0,0);

\draw [dashed, thin] (-1.7,-1.7,0) to [out=80, in=225](-.6,-1.3,0)
to [out=25, in=180](0.35,-1.1,0) to [out=0, in=155] (1.3,-1.4,0)
to [out=335, in=100](1.65,-1.7,0) to [out=270, in=25] (0.9,-2.0,0)
to [out=205, in=0](-0.2,-2.2,0) [out=180, in=335] to (-1.514,-2.0)
to [out=155, in=290] (-1.65,-1.7,0);

\node at (0.2,0.1,0) {{\LARGE$\Omega$}};

\node at (1.95,-1.5,0) {{\LARGE $S$}};

\node at (-0.3,-1.6,0) {{\LARGE $\mathcal{O}$}};
\end{tikzpicture}

 \end{subfigure}
\hfill 
\begin{subfigure}[H]{0.28\linewidth}
\centering
\includegraphics[width=\linewidth]{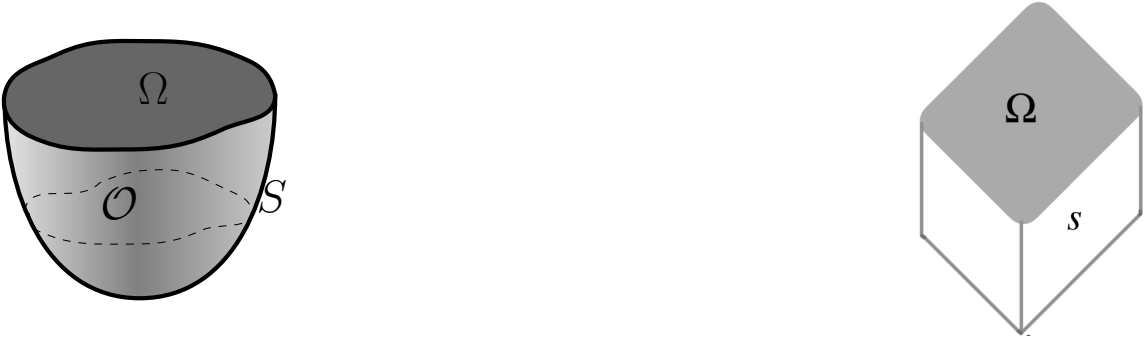}
\end{subfigure}
\caption{Polyhedral Flow-Structure Geometries }
\end{figure}

\noindent \textbf{Linearization and the PDE Model.}\newline

In what follows we provide some information about the linearization process
and the PDE description of the compressible flow-structure interaction
system under consideration. Firstly, we note that since the linear flow
problem here is already of great technical complexity and mathematical
challenge we assume that the pressure is a linear function of the density; $%
p(x,t)=C\rho (x,t),$ as is typically seen in the compressible flow
literature, and it is chosen as a primary variable to solve. For further and
detailed explanations of the physical background concerning the relationship
between pressure and density, the reader is referred to \cite{sima, material}%
.

The linearization takes place around an equilibrium point of the form $%
\left\{ p_{\ast },\mathbf{U},\varrho _{\ast }\right\} $ where the pressure
and density components ${p_{\ast },\varrho _{\ast }}$ are assumed to be
scalars (for  simplicity, assume $p_{\ast }=\varrho _{\ast }=1$), and a
generally non-zero, fixed, ambient vector field $\mathbf{U}:\mathcal{O}%
\rightarrow \mathbb{R}^{3}$ 
\begin{equation*}
\mathbf{U}%
(x_{1},x_{2},x_{3})=[U_{1}(x_{1},x_{2},x_{3}),U_{2}(x_{1},x_{2},x_{3}),U_{3}(x_{1},x_{2},x_{3})].
\end{equation*}%
At this point, we emphasize that flow linearization is taken with respect to
some inhomogeneous compressible Navier-Stokes system; thus, $%
\mathbf{U}$ does not need to be divergence free, generally.

Now, with respect to the above linearization, the small perturbations give
the following physical equations by generalizing the forcing functions: 
\begin{equation*}
(\partial _{t}+\mathbf{U}\cdot \nabla )p+\text{div}(u)+(\text{div}~\mathbf{U}%
)p=f(\mathbf{x})\quad {\ \ in\ \ }\mathcal{O}\times \mathbb{R}_{+},
\end{equation*}%
\begin{equation*}
(\partial _{t}+\mathbf{U}\cdot \nabla )u-\nu \Delta u-(\nu +\lambda )\nabla 
\text{div}u+\nabla p+\nabla \mathbf{U}\cdot u+(\mathbf{U}\cdot \nabla 
\mathbf{U})p=\mathbf{F(\mathbf{x})}\quad {\ \ in\ }~\mathcal{O}\times 
\mathbb{R}_{+}.
\end{equation*}%
(For further discussion, see also \cite{igor, material}.) When we delete
some of the non-critical lower order and the benign inhomogeneous terms in
the above equations, this linearization gives rise to the following system
of equations, in solution variables $u(x_{1},x_{2},x_{3},t)$ (flow
velocity), $p(x_{1},x_{2},x_{3},t)$ (pressure), $w_{1}(x_{1},x_{2},t)$
(elastic plate displacement) and $w_{2}(x_{1},x_{2},t)$ (elastic plate
velocity): 
\begin{align}
& \left\{ 
\begin{array}{l}
p_{t}+\mathbf{U}\cdot \nabla p+\text{div}~u\mathbf{+}\text{div}(\mathbf{U)}%
p=0~\text{ in }~\mathcal{O}\times (0,\infty ) \\ 
u_{t}+\mathbf{U}\cdot \nabla u+u\cdot \nabla \mathbf{U}-\text{div}\sigma
(u)+\eta u+\nabla p=0~\text{ in }~\mathcal{O}\times (0,\infty ) \\ 
(\sigma (u)\mathbf{n}-p\mathbf{n})\cdot \boldsymbol{\tau }=0~\text{ on }%
~\partial \mathcal{O}\times (0,\infty ) \\ 
u\cdot \mathbf{n}=0~\text{ on }~S\times (0,\infty ) \\ 
u\cdot \mathbf{n}=w_{2}+\mathbf{U}\cdot \nabla w_{1}\text{ \ \ on }~\Omega
\times (0,\infty )\text{ }%
\end{array}%
\right.   \label{1} \\
& \left\{ 
\begin{array}{l}
w_{1_{t}}-w_{2}-\mathbf{U}\cdot \nabla w_{1}=0\text{ \ \ on }~\Omega \times
(0,\infty ) \\ 
w_{2_{t}}+\Delta ^{2}w_{1}+\left[ 2\nu \partial _{x_{3}}(u)_{3}+\lambda 
\text{div}(u)-p\right] _{\Omega }=0~\text{ on }~\Omega \times (0,\infty ) \\ 
w_{1}=\frac{\partial w_{1}}{\partial \nu }=0~\text{ on }~\partial \Omega
\times (0,\infty )%
\end{array}%
\right.   \label{2} \\
& 
\begin{array}{c}
\left[ p(0),u(0),w_{1}(0),w_{2}(0)\right] =\left[ \overline{p},\overline{u},%
\overline{w_{1}},\overline{w_{2}}\right] \in \mathcal{H}_{0}.%
\end{array}
\label{3}
\end{align}%
Here, $\mathcal{H}_{0}$ is given as follows: 
\begin{equation}
\mathcal{H}_{0}=\mathcal{\{}[p_{0},u_{0},w_{1},w_{2}]\in \mathcal{H}%
:\int\limits_{\mathcal{O}}p_{0}d\mathcal{O}+\int\limits_{\Omega
}w_{1}d\Omega =0\mathcal{\}},\text{\ \ } \label{null-ort}
\end{equation}%
where 
\begin{equation}
\mathcal{H}\equiv L^{2}(\mathcal{O})\times \mathbf{L}^{2}(\mathcal{O})\times
H_{0}^{2}(\Omega )\times L^{2}(\Omega )  \label{stand}
\end{equation}%
is the associated finite energy (Hilbert) space, topologized by the standard
inner product: 
\begin{equation}
(\mathbf{y}_{1},\mathbf{y}_{2})_{\mathcal{H}}=(p_{1},p_{2})_{L^{2}(\mathcal{O%
})}+(u_{1},u_{2})_{\mathbf{L}^{2}(\mathcal{O})}+(\Delta w_{1},\Delta
w_{2})_{L^{2}(\Omega )}+(v_{1},v_{2})_{L^{2}(\Omega )}  \label{stand}
\end{equation}%
for any $\mathbf{y}_{i}=(p_{i},u_{i},w_{i},v_{i})\in \mathcal{H},~i=1,2.$%
\newline

It was shown in \cite{pelin-george} and \cite{p1} that $\mathcal{H}_{0}^{\bot }$ is the null
space of the flow structure semigroup generator closely associated with
(2)-(4). It will be shown below that solutions of (2)-(4), with initial data
drawn from $\mathcal{H}_{0}$, decay asymptotically to the zero state. Also,
the terms $\mathbf{U}\cdot \nabla u+u\cdot \nabla \mathbf{U}$ constitute the
so-called Oseen (linear) approximation of the Navier-Stokes equations \cite{Yudo}.

The quantity $\eta >0$ represents a drag force of the domain on the viscous
flow. In addition, the quantity $\mathbf{\tau }$ in (\ref{1}) is in the
space $TH^{1/2}(\partial \mathcal{O)}$ of tangential vector fields of
Sobolev index 1/2; that is,%
\begin{equation}
\mathbf{\tau }\in TH^{1/2}(\partial \mathcal{O)=}\{\mathbf{v}\in \mathbf{H}^{%
\frac{1}{2}}(\partial \mathcal{O}):\mathbf{v}|_{\partial \mathcal{O}}\cdot 
\mathbf{n}=0~\text{ on }~\partial \mathcal{O}\}.
\end{equation}%
(See e.g., p.846 of \cite{buffa2}.) In addition, we take ambient field $%
\mathbf{U}\in \mathbf{V}_{0}\cap W$ where 
\begin{equation}
\mathbf{V}_{0}=\{\mathbf{v}\in \mathbf{H}^{1}(\mathcal{O})~:~\left. \mathbf{v%
}\right\vert _{\partial \mathcal{O}}\cdot \mathbf{n}=0~\text{ on }~\partial 
\mathcal{O}\},  \label{V_0}
\end{equation}%
\begin{equation}
W=\{v\in \mathbf{H}^{1}(\mathcal{O}):v\in L^{\infty }(\mathcal{O}),\text{ \
\ }div(v)\in L^{\infty }(\mathcal{O}),\text{ \ \ and \ \ }v|_{\Omega }\in
C^{2}(\overline{\Omega })\}  \label{W}
\end{equation}

(The vanishing of the boundary for ambient fields is a standard assumption
in compressible flow literature; see \cite{dV},\cite{valli},\cite{decay},%
\cite{spectral}.) Moreover, the \textit{stress and strain tensors} in the
flow PDE component of (\ref{1})-(\ref{3}) are defined respectively as 
\begin{equation*}
\sigma (\mathbf{\mu })=2\nu \epsilon (\mathbf{\mu })+\lambda \lbrack
I_{3}\cdot \epsilon (\mathbf{\mu })]I_{3};\text{ \ }\epsilon _{ij}(\mathbf{%
\mu })=\dfrac{1}{2}\left( \frac{\partial \mathbf{\mu }_{j}}{\partial x_{i}}+%
\frac{\partial \mathbf{\mu }_{i}}{\partial x_{j}}\right) \text{, \ }1\leq
i,j\leq 3,
\end{equation*}%
where \textit{Lam\'{e} Coefficients }$\lambda \geq 0$ and $\nu >0$.

Here, we impose the so called \emph{impermeability condition} on $\Omega $;
namely, we assume that no fluid passes through the elastic portion of the
boundary during deflection \cite{bolotin,dowell1}. Also, note that the FSI
problem under consideration has a \emph{material derivative} term on the
deflected interaction surface which computes the time rate of change of any
quantity such as temperature or velocity (and hence also acceleration) for a
portion of a material in motion. For further details and the physical
explanation of the material derivative boundary conditions, see \cite%
{material, sima}.

\section{Previous Considerations}

\hspace{0.4cm} The long-time behavior, in particular the stability
properties of incompressible/compressible fluid/flow structure interaction
(FSI) systems have been a popular topic treated by many authors at different
levels \cite{clark, dvorak, george1, george2, pelin-george, agm, ALT, clw,
cr, igor2, p1}. The PDE systems under consideration are generally quite
complicated, due to their unbounded hyperbolic-parabolic coupling
mechanisms, both being intrinsic to the underlying physics. 

In contrast to the large body of literature on incompressible FSI \cite%
{clark, dvorak, george1, george2, agm, T1, T2, ALT, lorena, clw, cr, igor2,
canic}, existing work on compressible flows which interact with elastic
solids is relatively limited due to the inherent mathematical challenges
presented by the extra density (pressure) variable. In analyzing these
compressible FSI PDE models there are the following challenges: (i) Given
that the flow-plate variables are coupled via boundary interfaces, the FSI
geometry is inherently nonsmooth, \cite{igor, dV}. Although such geometries
are physically relevant, these domains also give rise to regularity issues
for solutions; (ii) The linearization of the compressible FSI system under
consideration takes place around a rest state which includes a generally
nonzero (not constant) ambient flow field $\mathbf{U}$, and the said
pressure PDE will contain a ``convective derivative"
term $\mathbf{U}\nabla p$ which is strictly above the level of finite
energy; (iii) The boundary conditions which couple flow and structure
contain nondissipative and unbounded terms which complicate the PDE analysis.

A preliminary linearized compressible FSI model with $\mathbf{U}\equiv 0$
(and so \textit{without} material derivative) was derived by I. Chueshov 
\cite{igor}. In this work, the author showed the wellposedness and the
existence of global attractors in the case that the structure equation has a
von Karman nonlinearity. However, in this pioneering work, the author noted
that his methods (Galerkin approach and Lyapunov functionals) to show the
wellposedness and long-term behavior of the corresponding system would not
accommodate the case of interest; namely linearization about $\mathbf{U}\neq
0$.

The main reason for the difficulty is the need to control the ``convective derivative" term $\mathbf{U}\nabla p$ which requires a
decomposition of the fluid and pressure solution pair $\{u,p\}$ with an
eventual appeal to some elliptic regularity results on nonsmooth domains.
Subsequently, the suggested model was analyzed in \cite{agw} via a semigroup
formulation and a wellposedness result with additional interior terms
associated to the $\mathbf{U}\neq 0$ under a pure velocity matching
condition at the interface was obtained. Then in \cite{material}, the
authors re-visited the same problem after a careful derivation of
fluid-structure interface conditions written in terms of ``material
derivative" $(\partial t+\mathbf{U}\cdot \nabla )w.$ This material
derivative computes the time rate of change of any quantity such as
temperature or velocity (and hence also acceleration) for a portion of a
material in motion. However, since the material derivative term $\mathbf{U}%
\nabla w$ is unbounded and nondissipative, it adds an additional challenge
to the analysis. In \cite{material}, this lack of boundedness and
dissipativity were ultimately overcome by re-topologizing the finite energy
space, and so the desired wellposedness result was obtained by an
appropriate semigroup formulation. At this point, we should note that in
both papers \cite{agw, material}, the obtained semigroup is unfortunately
NOT uniformly bounded (in time) which prevents us to seek long term behavior
of the solution to these corresponding problems.

With a view of looking into stability properties of the flow structure PDE
models considered in \cite{agw, material}, it appeared natural to ask: Is it
possible to obtain a semigroup wellposedness and subsequently a stability
result, with the semigroup being bounded uniformly in time, at least in some
(inherently invariant) subspace of the finite energy space? This was indeed
a very important departure point in order to analyze the stability of these
FSI problems since there was not any long time behavior result in the
literature for such classes of FSI systems. Motivated by this question, an
initial result of uniform stability result for the solution to linearized
compressible FSI model (\textit{without material derivative}) in an
appropriate subspace was shown in \cite{pelin-george, p1} by using linear
perturbation theory and also some novel multipliers.

\section{Novelty}

\hspace{0.4cm} Having established the uniform decay result for the solution
to the ``\textit{material free}" compressible FSI system in \cite%
{pelin-george, p1}, our next goal was to analyze the long time behavior of
the system, when under the effect of ``material term" on the interaction
interface. In order to embark on this stability work, we firstly needed to
have a uniformly bounded (in time) semigroup, again in some (inherently
invariant) subspace of the finite energy space. This result was obtained in 
\cite{sima}. Now, in the present work, we provide a novel result on the
asymptotic (strong) stability of compressible FSI model under said ``\textit{%
material derivative}" boundary conditions. This result ascertains that
solution to the FSI model decays to zero state for all initial data taken
from a subspace $\mathcal{H}_{0}$ which is ``almost" the entire phase space $%
\mathcal{H}$. This is, \textit{to the knowledge of the author}, the first
such result obtained with ``\textit{material derivative}" BC.

By way of obtaining the aforesaid asymptotic (strong) stability result, we
operate in the frequency domain which requires us
to deal with some static equations, analogous to (2)-(3), and the resolvent
of the generator of the dynamical system. While we have already been aware
of the need to control the ``convective derivative"
term $\mathbf{U}\nabla p$ from our previous works, in this current
manuscript the primary challenge is to have tight control of the material derivative
term $\mathbf{U}\nabla w$ in the coupling condition at the interface since
it destroys the dissipative nature of the dynamics. In this regard, the main
challenges associated with the analysis and the novelties are as follows:%
\newline

\noindent \textbf{(i)} \textit{Inner product adjustment for dissipativity:}
In order to establish a long time behavior result for the given FSI system,
we expect that the energy of this system is decreasing. However, the
presence of the problematic convective and material derivative terms $%
\mathbf{U}\nabla p$ and $\mathbf{U}\nabla w$ adds a great challenge to the
long term analysis since they can not be treated as a perturbation and
moreover cause a lack of dissipativity property in the system. To deal with
these issues and get the necessary dissipativity estimate we change the
inner-product structure of the state space and we re-topologize the
inherently invariant subspace with an inner product
which is equivalent to the standard inner product defined for the entire
state space. In this construction, we make use of a multiplier which
exploits the characterization of this invariant subspace and the Dirichlet
map that extends boundary data defined on the interaction interface to a
harmonic function in the flow domain. See Theorem \ref{wp} below and \cite{sima}.\newline

\noindent \textbf{(ii)} \textit{Use of pointwise resolvent criterion for
stability:} Since the domain of the associated flow-structure generator is
not compactly embedded into finite energy space $\mathcal{H}$, (see
(\ref{AAA})-(\ref{feedbackB}) below) it would seem natural to appeal to well known strong
stability result introduced by Arend-Batty \cite{arendt}. However, this does
not seem practicable for our model since the spectral analysis of the
semigroup generator will require a great amount of technicalities. For
example, to analyze the residual spectrum of the generator, we need to check
that none of the points on the imaginary axis are the eigenvalues of the
adjoint operator of this generator. Unfortunately, it is not going to be
straightforward to get this result since the adjoint, itself, is given as
the sum of three complicated matrix (see Lemma \ref{adj}). Instead, we will
use the very interesting pointwise resolvent criterion developed by
Chill and Tomilov \cite{CT, Tomilov_2} for the stability of bounded semigroups which
perfectly works for our model (See also \cite{B-L} which gives a preliminary version of the resolvent criterion for strong stability.) Hence we provide a very clean, short and
easy-to-follow proof which does not touch the --very complicated-- adjoint
operator and the challenges it might cause.

\section{Preliminaries}

\ Throughout, for a given domain $D$, the norm of corresponding space $%
L^{2}(D)$ will be denoted as $||\cdot ||_{D}$ (or simply $||\cdot ||$ when
the context is clear). Inner products in $L^{2}(\mathcal{O})$ or $\mathbf{L}%
^{2}(\mathcal{O})$ will be denoted by $(\cdot ,\cdot )_{\mathcal{O}}$,
whereas inner products $L^{2}(\partial \mathcal{O})$ will be written as $%
\langle \cdot ,\cdot \rangle _{\partial \mathcal{O}}$. We will also denote
pertinent duality pairings as $\left\langle \cdot ,\cdot \right\rangle
_{X\times X^{\prime }}$ for a given Hilbert space $X$. The space $H^{s}(D)$
will denote the Sobolev space of order $s$, defined on a domain $D$; $%
H_{0}^{s}(D)$ will denote the closure of $C_{0}^{\infty }(D)$ in the $%
H^{s}(D)$-norm $\Vert \cdot \Vert _{H^{s}(D)}$. We make use of the standard
notation for the boundary trace of functions defined on $\mathcal{O}$, which
are sufficently smooth: i.e., for a scalar function $\phi \in H^{s}(\mathcal{%
O})$, $\frac{1}{2}<s<\frac{3}{2}$, $\gamma (\phi )=\phi \big|_{\partial 
\mathcal{O}},$ which is a well-defined and surjective mapping on this range
of $s$, owing to the Sobolev Trace Theorem on Lipschitz domains (see e.g., 
\cite{necas}, or Theorem 3.38 of \cite{Mc}). Also, $C>0$ will denote a generic constant.\newline

\noindent In order to reduce the PDE system (\ref{1})-(\ref{3}) to an ODE
setting in the Hilbert space $\mathcal{H},$ define the operators

\begin{equation}
\mathcal{A}=\left[ 
\begin{array}{cccc}
-\mathbf{U}\mathbb{\cdot }\nabla (\cdot ) & -\text{div}(\cdot ) & 0 & 0 \\ 
-\mathbb{\nabla (\cdot )} & \text{div}\sigma (\cdot )-\eta I-\mathbf{U}%
\mathbb{\cdot \nabla (\cdot )-}\nabla \mathbf{U}\cdot (\cdot ) & 0 & 0 \\ 
0 & 0 & 0 & I \\ 
\left. \left[ \cdot \right] \right\vert _{\Omega } & -\left[ 2\nu \partial
_{x_{3}}(\cdot )_{3}+\lambda \text{div}(\cdot )\right] _{\Omega } & -\Delta
^{2} & 0%
\end{array}%
\right] ;  \label{AAA}
\end{equation}%
and 
\begin{equation}
B=\left[ 
\begin{array}{cccc}
-\text{div}(\mathbf{U)(\cdot )} & 0 & 0 & 0 \\ 
0 & 0 & 0 & 0 \\ 
0 & 0 & \mathbf{U}\mathbb{\cdot }\nabla (\cdot ) & 0 \\ 
0 & 0 & 0 & 0%
\end{array}%
\right]   \label{feedbackB}
\end{equation}%
\noindent where $D(\mathcal{A}+B)\subset \mathcal{H}$ is given by

\begin{equation*}
D(\mathcal{A}+B)=\{(p_{0},u_{0},w_{1},w_{2})\in L^{2}(\mathcal{O})\times 
\mathbf{H}^{1}(\mathcal{O})\times H_{0}^{2}(\Omega )\times L^{2}(\Omega )~:~%
\text{properties }(A.i)\text{--}(A.vi)~~\text{hold}\}:
\end{equation*}

\begin{enumerate}
\item[\textbf{(A.i)}] $\mathbf{U}\cdot \nabla p_{0}\in L^{2}(\mathcal{O})$

\item[\textbf{(A.ii)}] $\text{div}~\sigma (u_{0})-\nabla p_{0}\in \mathbf{L}%
^{2}(\mathcal{O})$ (So, $\left[ \sigma (u_{0})\mathbf{n}-p_{0}\mathbf{n}%
\right] _{\partial \mathcal{O}}\in \mathbf{H}^{-\frac{1}{2}}(\partial 
\mathcal{O})$)

\item[\textbf{(A.iii)}] $-\Delta ^{2}w_{1}-\left[ 2\nu \partial
_{x_{3}}(u_{0})_{3}+\lambda \text{div}(u_{0})\right] _{\Omega
}+p_{0}|_{\Omega }\in L^{2}(\Omega )$ (by elliptic regularity theory $w_1
\in H^{3}(\Omega))$

\item[\textbf{(A.iv)}] $\left( \sigma (u_{0})\mathbf{n}-p_{0}\mathbf{n}%
\right) \bot ~TH^{1/2}(\partial \mathcal{O})$. That is, 
\begin{equation*}
\left\langle \sigma (u_{0})\mathbf{n}-p_{0}\mathbf{n},\mathbf{\tau }%
\right\rangle _{\mathbf{H}^{-\frac{1}{2}}(\partial \mathcal{O})\times 
\mathbf{H}^{\frac{1}{2}}(\partial \mathcal{O})}=0\text{ \ in }\mathcal{D}%
^{\prime }(\mathcal{O})\text{\ for every }\mathbf{\tau }\in
TH^{1/2}(\partial \mathcal{O})
\end{equation*}

\item[\textbf{(A.v)}] $w_{2}+\mathbf{U}\cdot \nabla w_{1}\in
H_{0}^{2}(\Omega )$ (and so $w_{2}\in H_{0}^{1}(\Omega ))$

\item[\textbf{(A.vi)}] The flow velocity component $u_{0}=\mathbf{f}_{0}+%
\widetilde{\mathbf{f}}_{0}$, where $\mathbf{f}_{0}\in \mathbf{V}_{0}$ and $%
\widetilde{\mathbf{f}}_{0}\in \mathbf{H}^{1}(\mathcal{O})$ satisfies%
\footnote{%
The existence of an $\mathbf{H}^{1}(\mathcal{O})$-function $\widetilde{%
\mathbf{f}}_{0}$ with such a boundary trace on Lipschitz domain $\mathcal{O}$
is assured; see e.g., Theorem 3.33 of \cite{Mc}.}%
\begin{equation*}
\widetilde{\mathbf{f}}_{0}=%
\begin{cases}
0 & ~\text{ on }~S \\ 
(w_{2}+\mathbf{U}\cdot \nabla w_{1})\mathbf{n} & ~\text{ on}~\Omega%
\end{cases}%
\end{equation*}%
\noindent (and so $\mathbf{f}_{0}|_{\partial \mathcal{O}}\in
TH^{1/2}(\partial \mathcal{O})$).\newline
\end{enumerate}

\noindent Then the function $\Phi (t)=\left[ p,u,w_1,w_{2}\right] \in
C([0,T];\mathcal{H})$ that solves the problem (\ref{1})-(\ref{3}) satisfies 
\begin{eqnarray}
\dfrac{d}{dt}\Phi (t) &=&(\mathcal{A}+B)\Phi (t);  \notag \\
\Phi (0) &=&\Phi _{0}  \label{ODE}
\end{eqnarray}

\noindent We should note that the semigroup generator $(\mathcal{A}+B)$ is $%
\mathcal{H}_{0}$ - invariant (see Lemma 5 of \cite{sima}). Moreover, in \cite{sima}
we showed that the flow-structure PDE system (2)-(4), in the absence of the
term $u\cdot \nabla \mathbf{U} $ in the flow component, is associated with the generator of a $C_{0}$%
- contraction semigroup in $\mathcal{H}_{0}$. Therefore, by a bounded
perturbation argument, $(\mathcal{A}+B)$ generates a $C_{0}$- semigroup in $%
\mathcal{H}_{0},$ which, however, \underline{might} \underline{not} be a
contraction. Since the main goal of this manuscript is to show the
asymptotic (strong) stability of the PDE system (\ref{1})-(\ref{3}) for
every initial data in the subspace $\mathcal{H}_{0}$, having a uniformly
bounded semigroup will be our key point to embark on proving
our main result, Theorem \ref{St}. For the sake of simplicity, we use the
notation 
\begin{equation*}
(\mathcal{A}+B)|_{\mathcal{H}_{0}}=(\mathcal{A}+B).
\end{equation*}%
In order to analyze the long time behavior of the solution to (\ref{1})-(\ref%
{3}) in the reduced space $\mathcal{H}_{0}$ (defined in (\ref{null-ort})),
we have to guarantee that the dynamical system (\ref{1})-(\ref{3}) is
dissipative. But, unfortunately, the presence of the generally nonzero
ambient vector field $\mathbf{U}$ and the material derivative term boundary
conditions in the model cause the lack of dissipativity of the generator $(%
\mathcal{A}+B)$ (with respect to the standard inner product given in (\ref%
{stand}).) Hence, to obtain a necessary dissipativity estimate for each
solution variable, we re-topologize the phase space $\mathcal{H}$ with a new
inner product to be used in $\mathcal{H}_{0}$ and equivalent to the natural
inner product given in (\ref{stand}). (See \cite{sima}) For the readers
convenience, we provide the details here as well:\newline

With the above notation let us take $\varphi =\left[ p_{0},u_{0},w_{1},w_{2}%
\right] \in \mathcal{H}_{0},$ $\widetilde{\varphi }=\left[ \widetilde{p}_{0},%
\widetilde{u}_{0},\widetilde{w}_{1},\widetilde{w}_{2}\right] \in \mathcal{H}%
_{0}.$ Then the new inner product is given as 
\begin{equation*}
((\varphi ,\widetilde{\varphi }))_{\mathcal{H}_{0}}=(p_{0},\widetilde{p}%
_{0})_{\mathcal{O}}+(u_{0}-\alpha D(g\cdot \nabla w_{1})e_{3}+\xi \nabla
\psi (p_{0},w_{1}),\widetilde{u}_{0}-\alpha D(g\cdot \nabla \widetilde{w}%
_{1})e_{3}+\xi \nabla \psi (\widetilde{p}_{0},\widetilde{w}_{1}))_{\mathcal{O%
}}
\end{equation*}%
\begin{equation}
+(\Delta w_{1},\Delta \widetilde{w}_{1})_{\Omega }+(w_{2}+h_{\alpha }\cdot
\nabla w_{1}+\xi w_{1},\widetilde{w}_{2}+h_{\alpha }\cdot \nabla \widetilde{w%
}_{1}+\xi \widetilde{w}_{1})_{\Omega } \label{innpr},
\end{equation}%
and in turn the norm 
\begin{eqnarray}
\left\Vert \left\vert \varphi \right\vert \right\Vert _{\mathcal{H}%
_{0}}^{2} = \left( \left( \varphi ,\varphi \right) \right) _{\mathcal{H}_{0}}&=&\left\Vert p_{0}\right\Vert _{\mathcal{O}}^{2}+\left\Vert u_{0}-\alpha
D(g\cdot \nabla w_{1})e_{3}+\xi \nabla \psi (p_{0},w_{1})\right\Vert _{%
\mathcal{O}}^{2} \notag \\
&+&\left\Vert \Delta w_{1}\right\Vert _{\Omega
}^{2}+\left\Vert w_{2}+h_{\alpha }\cdot \nabla w_{1}+\xi w_{1}\right\Vert
_{\Omega }^{2} \label{specnorm}
\end{eqnarray}
for every $\varphi =\left[ p_{0},u_{0},w_{1},w_{2}\right] \in \mathcal{H}%
_{0}.$ Here, 
\begin{equation}
\textbf{a)}\text{ \ } \xi =\frac{(\frac{1}{2}-C_{2}r_{\mathbf{U}})-\sqrt{(\frac{1}{2}-C_{2}r_{%
\mathbf{U}})^{2}-4C_{2}(C_{1}+C_{2}r_{\mathbf{U}})r_{\mathbf{U}}}}{%
2(C_{1}+C_{2}r_{\mathbf{U}})},\text{ \ \ \ \ \ \textbf{b)}  \ }r_{\mathbf{U}}=\left\Vert 
\mathbf{U}\right\Vert _{\ast }+\left\Vert \mathbf{U}\right\Vert _{\ast
}^{2}+\left\Vert \mathbf{U}\right\Vert _{\ast }^{3}  \label{r-u}
\end{equation}%
where $C_1,C_2>0$ are independent of measurement $\left\Vert 
\mathbf{U}\right\Vert _{\ast }$ which is given as
\begin{equation}
\left\Vert \mathbf{U}\right\Vert _{\ast }=\left\Vert \mathbf{U}\right\Vert
_{L^{\infty }(\mathcal{O})}+\left\Vert \text{div}(\mathbf{U)}\right\Vert
_{L^{\infty }(\mathcal{O})}+\left\Vert \mathbf{U}|_{\Omega }\right\Vert
_{C^{2}(\overline{\Omega })}.\label{u-st}
\end{equation}
Also,

\textbf{(i)} the function $\psi =\psi (f,\chi )\in H^{1}(\mathcal{O)}$ is
considered to solve the following BVP for data $f\in L^{2}(\mathcal{O})$ and 
$\chi \in L^{2}(\Omega )$

\begin{equation}
\left\{ 
\begin{array}{c}
-\Delta \psi =f\text{ \ \ \ in \ }\mathcal{O} \\ 
\frac{\partial \psi }{\partial n}=0\text{ \ \ on \ }S \\ 
\frac{\partial \psi }{\partial n}=\chi \text{ \ \ on \ }\Omega  
\end{array}%
\right. \label{18.1}
\end{equation}%
with the compatibility condition%
\begin{equation}
\int\limits_{\mathcal{O}}fd\mathcal{O+}\int\limits_{\Omega }\chi d\Omega =0.
\end{equation}%
We should note that by known elliptic regularity results for the Neumann
problem on Lipschitz domains--see e.g; \cite{JK}-- we have%
\begin{equation}
\left\Vert \psi (f,\chi )\right\Vert _{H^{\frac{3}{2}}(\mathcal{O)}}\leq %
\left[ \left\Vert f\right\Vert _{\mathcal{O}}+\left\Vert \chi \right\Vert
_{\partial \mathcal{O}}\right] \label{psireg}.
\end{equation}%
\textbf{(ii)} the map $D(\cdot )$ is the Dirichlet map that extends boundary
data $\varphi $ defined on $\Omega $ to a harmonic function in $\mathcal{O}$
satisfying:%
\begin{equation}
D\varphi =f\Leftrightarrow \left\{ 
\begin{array}{c}
\Delta f=0\text{ \ \ in \ }\mathcal{O} \\ 
f|_{\mathcal{\partial O}}=\varphi |_{ext}\text{ \ \ on \ \ }\mathcal{%
\partial O}%
\end{array}%
\right.   \label{20.1}
\end{equation}%
where 
\begin{equation*}
\varphi |_{ext}=%
\begin{cases}
0 & \text{ on }~S \\ 
\phi  & \text{ on }~\Omega 
\end{cases}%
\end{equation*}%
Then by, e.g., \cite[Theorem 3.3.8]{Mc}, and Lax-Milgram Theorem, we deduce
that 
\begin{equation}
D\in \mathcal{L}\big(H_{0}^{1/2+\epsilon }(\Omega );H^{1}(\mathcal{O})\big).
\end{equation}%
\textbf{(iii)} the vector field $h_{\alpha }(\cdot )$ is defined as $%
h_{\alpha }(\cdot )=\mathbf{U}|_{\Omega }-\alpha g,$ where $g(\cdot )$ is a $%
C^{2}$ extension of the normal vector $\mathbf{\nu }(x)$ (recall, with
respect to $\Omega )$ and we specify the parameter $\alpha $ to be 
\begin{equation}\alpha =2\left\Vert \mathbf{U}\right\Vert _{\ast },
\end{equation}%
where $\left\Vert \mathbf{U}\right\Vert _{\ast }$ is as defined in (\ref{u-st}).

\section{Result I: Bounded Semigroup Wellposedness in $\mathcal{H}_0$}

The bounded semigroup wellposedness is given as follows:

\begin{theorem}
\label{wp} \label{wp} With reference to problem (\ref{1})-(\ref{3}), assume
that $\mathbf{U}\in \mathbf{V_{0}}\cap W$ with 
\begin{equation*}
\left\Vert \mathbf{U}\right\Vert _{\ast }=\left\Vert \mathbf{U}\right\Vert
_{L^{\infty }(\mathcal{O})}+\left\Vert \text{div}(\mathbf{U)}\right\Vert
_{L^{\infty }(\mathcal{O})}+\left\Vert \mathbf{U}|_{\Omega }\right\Vert
_{C^{2}(\overline{\Omega })}
\end{equation*}%
sufficiently small. Then we have the following:\newline
\textbf{(i)} The operator $(\mathcal{A}+B):D(\mathcal{A}+B)\cap \mathcal{H}%
_{0}\rightarrow \mathcal{H}_{0}$ is maximal dissipative. In particular, it
obeys the following inequality for all $\phi =\left[ p_{0},{u}%
_{0},w_{1},w_{2}\right] \in D(\mathcal{A}+B)\cap \mathcal{H}_{0}:$%
\begin{equation}
\text{Re}(([\mathcal{A}+B]\varphi ,\varphi ))_{\mathcal{H}_{0}}\leq \left( -%
\frac{1}{4}+C_{\delta }r_{\mathbf{U}}\right) \left[ (\sigma (u_{0}),\epsilon
(u_{0}))_{\mathcal{O}}+\left\Vert u_{0}\right\Vert _{\mathcal{O}}^{2}\right]
+\left(- \frac{1}{2}+\delta C^{\ast }\right) \xi \left[ \left\Vert
p_{0}\right\Vert _{\mathcal{O}}^{2}+\left\Vert \Delta w_{1}\right\Vert
_{\Omega }^{2}\right] ,  \label{ek}
\end{equation}%
where $0<\delta <\frac{1}{2C^{\ast }}$, and $r_{\mathbf{U}}$ is as given in (%
\ref{r-u}).\newline
\textbf{(ii)} Consequently, the operator $(\mathcal{A}+B):D(\mathcal{A}+B)\cap \mathcal{H}%
_{0}\rightarrow \mathcal{H}_{0}$ generates a strongly continuous semigroup $%
\{e^{(\mathcal{A}+B)t}\}_{t\geq 0}$ on $\mathcal{H}_{0}.$ Hence, for every
initial data $\left[ \overline{p},\overline{u},\overline{w_{1}},\overline{%
w_{2}}\right] \in \mathcal{H}_{0},$ the solution $\left[ p(t),{u}%
(t),w_{1}(t),w_{2}(t)\right] $ of problem (\ref{1})-(\ref{3}) is given
continuously by 
\begin{equation}
\left[ 
\begin{array}{c}
p(t) \\ 
u(t) \\ 
w_{1}(t) \\ 
w_{2}(t)%
\end{array}%
\right] =e^{(\mathcal{A}+B)t}\left[ 
\begin{array}{c}
\overline{p} \\ 
\overline{u} \\ 
\overline{w_{1}} \\ 
\overline{w_{2}}%
\end{array}%
\right] \in C([0,T];\mathcal{H}_{0}).
\end{equation}%
Moreover, this semigroup is uniformly bounded in time with respect to the
standard $\mathcal{H}$-inner product. With respect to the special norm in (%
\ref{specnorm}), the semigroup $\{e^{(\mathcal{A}+B)t}\}_{t\geq 0}$ is in fact a contraction.
\end{theorem}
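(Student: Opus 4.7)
I would invoke the Lumer--Phillips theorem on $\mathcal{H}_0$ endowed with the modified inner product $((\cdot,\cdot))_{\mathcal{H}_0}$ of (\ref{innpr}). Since this inner product is equivalent to the standard $\mathcal{H}$-inner product---equivalence inherited from the boundedness of the Dirichlet lifting $D$ in (\ref{20.1}) and of the Neumann potential $\nabla\psi(\cdot,\cdot)$ in (\ref{psireg})---a contraction in the modified norm automatically becomes a uniformly bounded semigroup in the canonical topology, which is exactly part (ii). The task therefore splits into (a) the dissipativity estimate (\ref{ek}), and (b) the range condition $\text{Range}(\lambda I-(\mathcal{A}+B))=\mathcal{H}_0$ for some $\lambda>0$.

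\textbf{Dissipativity (the main work).} For (a) I would expand $\text{Re}(([\mathcal{A}+B]\varphi,\varphi))_{\mathcal{H}_0}$ block by block, integrating by parts and using the boundary conditions (A.i)--(A.vi) together with the trace identity $u_0\cdot\mathbf{n}=w_2+\mathbf{U}\cdot\nabla w_1$ on $\Omega$ and the impermeability $u_0\cdot\mathbf{n}=0$ on $S$. The correction pieces $\alpha D(g\cdot\nabla w_1)e_3$ and $\xi\nabla\psi(p_0,w_1)$ inside the fluid slot, together with $h_\alpha\cdot\nabla w_1+\xi w_1$ inside the plate-velocity slot, are designed precisely to absorb the non-dissipative contributions produced by the convective derivative $\mathbf{U}\cdot\nabla p_0$, the material derivative $\mathbf{U}\cdot\nabla w_1$, and the interface trace $-p_0\mathbf{n}|_\Omega$. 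Concretely: the convective flow term $(\mathbf{U}\cdot\nabla p_0,p_0)_{\mathcal{O}}$ integrates, via Green's identity and $\mathbf{U}\cdot\mathbf{n}|_{\partial\mathcal{O}}=0$, to the benign $-\tfrac{1}{2}(\text{div}\,\mathbf{U})|p_0|^2$; pairing $\nabla p_0$ with $\xi\nabla\psi(p_0,w_1)$ and invoking (\ref{18.1}) yields the coercive $\xi\|p_0\|_{\mathcal{O}}^2$, while the accompanying boundary piece on $\Omega$ cancels against the $\xi w_1$ addition through the coupling trace; the elastic contribution $-(\sigma(u_0),\epsilon(u_0))_{\mathcal{O}}-\eta\|u_0\|^2$ follows from $(\text{div}\,\sigma(u_0)-\eta u_0,u_0)_{\mathcal{O}}$ after (A.iv) discards the tangential stress; finally, the $h_\alpha\cdot\nabla w_1$ and $\alpha D(g\cdot\nabla w_1)e_3$ pieces annihilate the non-dissipative $\mathbf{U}\cdot\nabla w_1$ contribution issuing from the plate equation and the interface. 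All residual cross terms are lower-order quadratics carrying at least one factor of $\|\mathbf{U}\|_\ast$; a Young inequality with parameter $\delta>0$ absorbs them into the coercive terms and produces (\ref{ek}) under the specified choice $\alpha=2\|\mathbf{U}\|_\ast$ and $\xi$ from (\ref{r-u}). For $\|\mathbf{U}\|_\ast$ sufficiently small, both prefactors on the right of (\ref{ek}) are strictly negative.

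\textbf{Range condition and conclusion.} For (b) I would build on \cite{sima}: the operator with the zeroth-order term $\nabla\mathbf{U}\cdot(\cdot)$ deleted is already maximal dissipative on $(\mathcal{H}_0,((\cdot,\cdot))_{\mathcal{H}_0})$, so its resolvent set contains $(0,\infty)$. Since $\mathbf{U}\in W$ implies $\nabla\mathbf{U}\in L^{\infty}(\mathcal{O})$, the multiplication $u_0\mapsto\nabla\mathbf{U}\cdot u_0$ is bounded on $\mathbf{L}^2(\mathcal{O})$; a Neumann-series correction of the known resolvent, for $\lambda>\|\nabla\mathbf{U}\|_{L^{\infty}}$, then gives $\text{Range}(\lambda I-(\mathcal{A}+B))=\mathcal{H}_0$. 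Combining (a) and (b), Lumer--Phillips delivers the $((\cdot,\cdot))_{\mathcal{H}_0}$-contraction semigroup, and norm equivalence yields the uniform $\mathcal{H}$-bound asserted in (ii). The main obstacle is the bookkeeping in Step (a): each modifying term in (\ref{innpr}) must be matched to a specific non-dissipative contribution, with nothing left over that cannot be absorbed through the smallness of $r_\mathbf{U}$; the value $\alpha=2\|\mathbf{U}\|_\ast$ together with the quadratic-formula choice of $\xi$ in (\ref{r-u}) is exactly what makes this cancellation tight.
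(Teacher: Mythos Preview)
Your plan is essentially the paper's own, and it would work. Two points of comparison and one caution.

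\textbf{Organization of the dissipativity step.} The paper does not expand $(([\mathcal{A}+B]\varphi,\varphi))_{\mathcal{H}_0}$ from scratch as you propose. Instead it introduces the intermediate operator $\mathcal{A}_0=\mathcal{A}+L_{\mathbf{U}}$ (i.e., with the zeroth-order term $u\cdot\nabla\mathbf{U}$ removed), quotes from \cite{sima} the dissipativity estimate for $\mathcal{A}_0+B$ in the form $\text{Re}(([\mathcal{A}_0+B]\varphi,\varphi))_{\mathcal{H}_0}\le -\tfrac{1}{4}(\sigma(u_0),\epsilon(u_0))-\tfrac{\eta}{4}\|u_0\|^2-\tfrac{\xi}{2}\|p_0\|^2-\tfrac{\xi}{2}\|\Delta w_1\|^2$, and then only estimates the single correction $((L_{\mathbf{U}}\varphi,\varphi))_{\mathcal{H}_0}$. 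That correction is handled by one integration by parts moving the derivative off $\mathbf{U}$, followed by the regularity bounds for $D$ and $\psi$ (your (\ref{20.1}) and (\ref{psireg}), with the additional $H^2$ bound on $\psi$ valid under the polyhedral-domain hypotheses). Your direct expansion amounts to merging this argument with the Appendix proof of the $\mathcal{A}_0+B$ estimate; it is correct but longer.

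\textbf{Range condition.} Same idea in both: \cite{sima} gives maximal dissipativity of $\mathcal{A}_0+B$, and $\mathcal{A}+B$ differs by the single lower-order term $L_{\mathbf{U}}$, so a standard perturbation (the paper cites p.~211 of \cite{Kesevan}) transfers the range condition. Your Neumann-series phrasing is equivalent.

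\textbf{A caution.} You assert that $\mathbf{U}\in W$ implies $\nabla\mathbf{U}\in L^\infty(\mathcal{O})$; the definition of $W$ in the paper only gives $\mathbf{U}\in L^\infty$, $\text{div}\,\mathbf{U}\in L^\infty$, and $\mathbf{U}|_\Omega\in C^2(\overline{\Omega})$, so $\nabla\mathbf{U}\in L^\infty$ is not available and $L_{\mathbf{U}}$ is not obviously bounded on $\mathbf{L}^2(\mathcal{O})$. The paper sidesteps this in the dissipativity estimate by integrating $(\nabla\mathbf{U}\cdot u_0,\,\cdot\,)_{\mathcal{O}}$ by parts so that only $\mathbf{U}\in L^\infty$ and $\|u_0\|_{H^1}$ appear; for the range condition one should regard $L_{\mathbf{U}}$ as relatively bounded with respect to $\mathcal{A}_0+B$ (the graph norm controls $\|u_0\|_{H^1}$) rather than as a bounded operator on $\mathcal{H}_0$. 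With that adjustment your argument goes through.
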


\begin{proof}
Let 
\begin{equation}
\mathcal{A}_{0}\equiv \mathcal{A}+L_{\mathbf{U}}, \label{A-0}
\end{equation}%
where 
\begin{equation}
L_{\mathbf{U}}=\left[ 
\begin{array}{cccc}
0 & 0 & 0 & 0 \\ 
0 & \nabla \mathbf{U}\cdot (\cdot ) &0 & 0 \\ 
0 & 0 & 0 & 0 \\ 
0 & 0 & 0 & 0%
\end{array}%
\right]   \label{l-u}
\end{equation}%
with $D(\mathcal{A}_{0}+B)=D(\mathcal{A}+B).$ Therewith, it is established in
\cite{sima} that $\mathcal{A}_{0}+B$ is maximal dissipative in $\mathcal{H}_{0},$
for all $\left\Vert \mathbf{U}\right\Vert _{\ast }$ sufficiently small. In
particular, we have for $\phi =\left[ p_{0},{u}_{0},w_{1},w_{2}\right] \in D(%
\mathcal{A}_{0}+B)\cap \mathcal{H}_{0}:$%
\begin{equation}
\text{Re}(([\mathcal{A}_{0}+B]\varphi ,\varphi ))_{\mathcal{H}_{0}}\leq -%
\frac{1}{4}(\sigma (u_{0}),\epsilon (u_{0}))_{\mathcal{O}}-\frac{\eta }{4}%
\left\Vert u_{0}\right\Vert _{\mathcal{O}}^{2}-\frac{\xi }{2}\left\Vert
p_{0}\right\Vert _{\mathcal{O}}^{2}-\frac{\xi }{2}\left\Vert \Delta
w_{1}\right\Vert _{\Omega }^{2},  \label{15.2}
\end{equation}%
where parameter $\xi $ is as given in (\ref{r-u}). (For the sake of
completion, the proof of this estimate is given in the Appendix.) Given $%
\phi =\left[ p_{0},{u}_{0},w_{1},w_{2}\right] \in D(\mathcal{A}+B)\cap 
\mathcal{H}_{0},$ invoking (\ref{15.2}) we have%
\begin{eqnarray}
(([\mathcal{A}+B]\varphi ,\varphi ))_{\mathcal{H}_{0}} &=&(([\mathcal{A}%
_{0}+B]\varphi ,\varphi ))_{\mathcal{H}_{0}}-((L_{\mathbf{U}}\varphi
,\varphi ))_{\mathcal{H}_{0}}  \notag \\
&\leq &-\frac{1}{4}(\sigma (u_{0}),\epsilon (u_{0}))_{\mathcal{O}}-\frac{%
\eta }{4}\left\Vert u_{0}\right\Vert _{\mathcal{O}}^{2}-\frac{\xi }{2}%
\left\Vert p_{0}\right\Vert _{\mathcal{O}}^{2}-\frac{\xi }{2}\left\Vert
\Delta w_{1}\right\Vert _{\Omega }^{2}  \notag \\
&&-(\nabla \mathbf{U}\cdot (u_{0}),u_{0}-\alpha D(g\cdot \nabla
w_{1})e_{3}+\xi \nabla \psi (p_{0},w_{1}))_{\mathcal{O}}  \notag \\
&=&-\frac{1}{4}(\sigma (u_{0}),\epsilon (u_{0}))_{\mathcal{O}}-\frac{\eta }{4%
}\left\Vert u_{0}\right\Vert _{\mathcal{O}}^{2}-\frac{\xi }{2}\left\Vert
p_{0}\right\Vert _{\mathcal{O}}^{2}-\frac{\xi }{2}\left\Vert \Delta
w_{1}\right\Vert _{\Omega }^{2}  \notag \\
&&-\int\limits_{\partial \mathcal{O}}(u_{0}\cdot \mathbf{n)U}\cdot \lbrack
u_{0}-\alpha D(g\cdot \nabla w_{1})e_{3}+\xi \nabla \psi
(p_{0},w_{1}))]d\partial \mathcal{O}  \notag \\
&&+\int\limits_{\mathcal{O}}div(u_{0})\mathbf{U}\cdot \lbrack
u_{0}-\alpha D(g\cdot \nabla w_{1})e_{3}+\xi \nabla \psi (p_{0},w_{1}))]d%
\mathcal{O}  \notag \\
&&+\int\limits_{\mathcal{O}}\mathbf{U}\cdot \left( u_{0}\cdot \nabla
\lbrack u_{0}-\alpha D(g\cdot \nabla w_{1})e_{3}+\xi \nabla \psi
(p_{0},w_{1}))]\right) d\mathcal{O}  \label{15.3}
\end{eqnarray}%
We estimate RHS implicitly using the regularity results of \cite{Dauge_1,Dauge_2, Dauge_3},
which are valid under the assumptions made on the geometry.\newline
\textbf{(I)} The mapping in (\ref{20.1}) satisfies%
\begin{equation}
\left\Vert D(g\cdot \nabla w_{1})\right\Vert _{H^{1}(\mathcal{O})}\leq
C\left\Vert w_{1}\right\Vert _{H_{0}^{2}(\Omega )}  \label{15.4}
\end{equation}%
Thus, we have\newline
\textbf{(I-a): }%
\begin{eqnarray}
\left\vert \alpha \int\limits_{\partial \mathcal{O}}(u_{0}\cdot \mathbf{n)U}%
\cdot \overline{D(g\cdot \nabla w_{1})e_{3}}d\partial \mathcal{O}\right\vert
&\leq &\alpha \left\Vert \mathbf{U}\right\Vert _{\ast }\left\Vert
u_{0}\right\Vert _{H^{1}(\mathcal{O})}\left\Vert w_{1}\right\Vert
_{H_{0}^{2}(\Omega )} \notag \\
&=&\sqrt{\left\Vert \mathbf{U}\right\Vert _{\ast }}\left\Vert
u_{0}\right\Vert _{H^{1}(\mathcal{O})}\frac{\alpha \sqrt{\left\Vert \mathbf{U%
}\right\Vert _{\ast }}}{\sqrt{\xi }}\sqrt{\xi }\left\Vert \Delta
w_{1}\right\Vert _{\Omega }  \notag \\
&\leq &C_{\delta }\left\Vert \mathbf{U}\right\Vert _{\ast }\left\Vert
u_{0}\right\Vert _{H^{1}(\mathcal{O})}^{2}+\frac{\delta \alpha
^{2}\left\Vert \mathbf{U}\right\Vert _{\ast }\xi }{\xi }\left\Vert \Delta
w_{1}\right\Vert _{\Omega }^{2}  \notag \\
&\leq &C_{\delta }\left\Vert \mathbf{U}\right\Vert _{\ast }\left\Vert
u_{0}\right\Vert _{H^{1}(\mathcal{O})}^{2}+\delta C\frac{r_{\mathbf{U}}}{%
\xi }\xi \left\Vert \Delta w_{1}\right\Vert _{\Omega }^{2}  \notag \\
&\leq &C_{\delta }\left\Vert \mathbf{U}\right\Vert _{\ast }\left\Vert
u_{0}\right\Vert _{H^{1}(\mathcal{O})}^{2}+\delta C\xi \left\Vert \Delta
w_{1}\right\Vert _{\Omega }^{2}.  \label{15.5}
\end{eqnarray}%
At this point, we should emphasize that $\frac{r_{\mathbf{U}}}{%
\xi }$ will be bounded if $\left\Vert \mathbf{U}\right\Vert _{\ast }$ is small enough and so $C>0$ is independent of $\left\Vert \mathbf{U}\right\Vert _{\ast }$. Also, we implicitly used the Sobolev Trace Theorem for the first
inequality.\newline
\textbf{(I-b): }Again by (\ref{15.4}), similarly%
\begin{equation}
\left\vert \alpha \int\limits_{\mathcal{O}}\mathbf{U}\cdot \left(
u_{0}\cdot \nabla \overline{D(g\cdot \nabla w_{1})e_{3}}\right) d\mathcal{O}%
\right\vert \leq C_{\delta }\left\Vert \mathbf{U}\right\Vert _{\ast
}\left\Vert u_{0}\right\Vert _{H^{1}(\mathcal{O})}^{2}+\delta C\xi
\left\Vert \Delta w_{1}\right\Vert _{\Omega }^{2}  \label{15.55}
\end{equation}%
\newline
\textbf{(II) } Using (\ref{psireg}) and (\ref{15.4}), we have as in (I-a),
\begin{eqnarray}
\left\vert\int\limits_{\mathcal{O}}div(u_{0})\mathbf{U}\cdot \lbrack
u_{0}-\alpha D(g\cdot \nabla w_{1})e_{3}+\xi \nabla \psi (p_{0},w_{1}))]d%
\mathcal{O}\right\vert & \leq & C_{\delta }(\left\Vert \mathbf{U}\right\Vert _{\ast }^{2}+\left\Vert \mathbf{U}\right\Vert _{\ast })\left\Vert
u_{0}\right\Vert _{H^{1}(\mathcal{O})}^{2} \notag\\
&+& \delta C\xi \left[ \left\Vert
p_{0}\right\Vert _{\mathcal{O}}^{2}+\left\Vert \Delta w_{1}\right\Vert
_{\Omega }^{2}\right] \label{f}
\end{eqnarray}

\noindent \textbf{(III) }The mapping (\ref{18.1}) satisfies%
\begin{equation}
\left\vert \psi (p_{0},w_{1})\right\Vert _{H^{2}(\mathcal{O)}}\leq \left[
\left\Vert p_{0}\right\vert _{\mathcal{O}}+\left\Vert w_{1}\right\vert
_{H_{0}^{\frac{1}{2}+\epsilon }(\Omega \mathcal{)}}\right]   \label{15.57}
\end{equation}%
Consequently, we have 
\begin{eqnarray}
&&\left\vert \xi \int\limits_{\partial \mathcal{O}}(u_{0}\cdot \mathbf{n)U}%
\cdot \overline{\nabla \psi (p_{0},w_{1})}d\partial \mathcal{O}\right\vert
+\left\vert \xi \int\limits_{\mathcal{O}}\mathbf{U}\cdot \left( u_{0}\cdot
\nabla \lbrack \nabla \psi (p_{0},w_{1}))]\right) d\mathcal{O}\right\vert  
\notag \\
&\leq &C\xi \left\Vert \mathbf{U}\right\Vert _{\ast }\left\Vert
u_{0}\right\Vert _{H^{1}(\mathcal{O})}\left[ \left\Vert p_{0}\right\Vert _{%
\mathcal{O}}+\left\Vert \Delta w_{1}\right\Vert _{\Omega }\right]   \notag \\
&\leq &C_{\delta }\left\Vert \mathbf{U}\right\Vert _{\ast }^{2}\left\Vert
u_{0}\right\Vert _{H^{1}(\mathcal{O})}^{2}+\delta C\xi \left[ \left\Vert
p_{0}\right\Vert _{\mathcal{O}}^{2}+\left\Vert \Delta w_{1}\right\Vert
_{\Omega }^{2}\right]   \label{15.6}
\end{eqnarray}%
Applying the estimates (\ref{15.5}),(\ref{15.55}), (\ref{f}) and (\ref{15.6}) to RHS of
(\ref{15.3}), we now have 
\begin{eqnarray*}
\text{Re}(([\mathcal{A}+B]\varphi ,\varphi ))_{\mathcal{H}_{0}} &\leq &-%
\frac{1}{4}(\sigma (u_{0}),\epsilon (u_{0}))_{\mathcal{O}}-\frac{\eta }{4}%
\left\Vert u_{0}\right\Vert _{\mathcal{O}}^{2}-\frac{\xi }{2}\left\Vert
p_{0}\right\Vert _{\mathcal{O}}^{2}-\frac{\xi }{2}\left\Vert \Delta
w_{1}\right\Vert _{\Omega }^{2} \\
&&+C_{\delta }\left\Vert \mathbf{U}\right\Vert _{\ast }\left\Vert
u_{0}\right\Vert _{H^{1}(\mathcal{O})}^{2}+C_{\delta }\left\Vert \mathbf{U}%
\right\Vert _{\ast }^{2}\left\Vert u_{0}\right\Vert _{H^{1}(\mathcal{O}%
)}^{2}\\
&&+\delta C\xi \left[ \left\Vert p_{0}\right\Vert _{\mathcal{O}%
}^{2}+\left\Vert \Delta w_{1}\right\Vert _{\Omega }^{2}\right] 
\end{eqnarray*}%
which yields the estimate (\ref{ek}), upon a use of Korn's inequality and this
gives the dissipativity of $\mathcal{A}+B,$ for $\mathbf{U}$ sufficiently
small and $0<\delta <\frac{1}{2C^{\ast }}.$ Moreover, it is shown in \cite{sima}
that $\mathcal{A}_{0}+B$ is maximal dissipative, then by a perturbation
argument--see e.g., p. 211 of \cite{Kesevan}-- $\mathcal{A}+B$ is likewise maximal
dissipative. This concludes the proof of Theorem \ref{wp}.
\end{proof}

\section{Result II: Asymptotic (Strong) Stability}

This section is devoted to address the issue of asymptotic behavior of the
solution whose existence-uniqueness is guaranteed by Theorem \ref{wp}. In
this regard, we show that the system given in (\ref{1})-(\ref{3}) is
strongly stable in $\mathcal{H}_{0}$. That is, given any $\left[ \overline{p}%
,\overline{u},\overline{w_{1}},\overline{w_{2}}\right] \in \mathcal{H}_{0},$
the corresponding solution $\left[ p(t),{u}(t),w_{1}(t),w_{2}(t)\right] \in
C([0,T];\mathcal{H}_{0})$ of (\ref{1})-(\ref{3}) satisfies%
\begin{equation*}
\lim_{t\rightarrow \infty }\left\Vert |\left[ p(t),{u}(t),w_{1}(t),w_{2}(t)%
\right] |\right\Vert _{\mathcal{H}_{0}}=0.
\end{equation*}
Our proof will be based on an ultimate appeal to the pointwise criterion 
\cite[pp. 26, Theorem 8.4 (i)]{CT}:

\begin{theorem}
\label{CT} Let $A$ be the generator of a bounded $C_{0}$-semigroup on a
Hilbert space $X$. If there exists a dense set $M\subset X$ such that 
\begin{equation*}
\underset{\alpha \rightarrow 0^{+}}{\lim }\sqrt{\alpha }R(\alpha +i\beta
;A)x=0 
\end{equation*}%
for every $x\in M$ and every $\beta \in 
\mathbb{R}
,$ then the semigroup is stable.\bigskip
\end{theorem}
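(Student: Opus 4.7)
My plan is: reduce via density to single elements, use a Hilbert-space Plancherel identity to tie $\|T(t)y\|^2$ to the integral of $\|\sqrt{\alpha}\,R(\alpha+i\beta;A)y\|^2$ over $\beta\in\mathbb{R}$, and then convert Abel-mean decay into strong decay of $T(t)y$ via a Tauberian-type argument exploiting the specific $\sqrt{\alpha}$ rate.

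\textbf{Density reduction.} Let $K:=\sup_{t\ge 0}\|T(t)\|<\infty$. A standard three-$\varepsilon$ approximation reduces the assertion to showing $\|T(t)y\|\to 0$ for $y$ in a dense subset of $X$. For the Plancherel argument below I also want $y\in D(A)$, so I would work with $R(\lambda_0;A)M\subset D(A)$ (still dense in $X$) for some fixed $\lambda_0>0$. The resolvent identity
\begin{equation*}
R(\alpha+i\beta;A)R(\lambda_0;A)x \;=\; (\lambda_0-\alpha-i\beta)^{-1}\bigl[R(\alpha+i\beta;A)x-R(\lambda_0;A)x\bigr]
\end{equation*}
ensures the pointwise hypothesis is inherited by this smaller set.

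\textbf{Plancherel identity.} Fix such a $y$. For $\alpha>0$, the map $t\mapsto\mathbf{1}_{[0,\infty)}(t)\,e^{-\alpha t}T(t)y$ lies in $L^2(\mathbb{R};X)$ with vector Fourier transform $\beta\mapsto R(\alpha+i\beta;A)y$, so Plancherel yields
\begin{equation*}
2\alpha\int_0^\infty e^{-2\alpha t}\|T(t)y\|^2\,dt \;=\; \frac{1}{\pi}\int_{\mathbb{R}}\bigl\|\sqrt{\alpha}\,R(\alpha+i\beta;A)y\bigr\|^2\,d\beta.
\end{equation*}
The integrand on the right is exactly the squared norm the hypothesis forces to zero pointwise in $\beta$. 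Split $\mathbb{R}=\{|\beta|\le R\}\cup\{|\beta|>R\}$. On the tail, $R(\alpha+i\beta;A)y=(\alpha+i\beta)^{-1}[y+R(\alpha+i\beta;A)Ay]$ combined with $\|R(\alpha+i\beta;A)\|\le K/\alpha$ gives $\|\sqrt{\alpha}\,R(\alpha+i\beta;A)y\|^2\le C\alpha(\|y\|^2+\|Ay\|^2)/(\alpha^2+\beta^2)$, whose contribution is $O(1/R)$ uniformly in small $\alpha$.

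\textbf{Main obstacle and conclusion.} The central technical obstacle is producing a uniform-in-$\alpha$ $L^1$-envelope on the compact band $\{|\beta|\le R\}$, since the naive bound $K\|y\|/\sqrt{\alpha}$ blows up as $\alpha\downarrow 0$; I would address this by exploiting the analyticity of $\alpha\mapsto R(\alpha+i\beta;A)y$ in the right half-plane, e.g.\ via a Vitali or Harnack-type mean-value argument on small discs centered in $\mathbb{C}_+$, to upgrade the pointwise decay into a local uniform bound. With such an envelope, dominated convergence forces the right-hand side above to vanish as $\alpha\downarrow 0$, so the exponential Abel mean $2\alpha\int_0^\infty e^{-2\alpha t}\|T(t)y\|^2\,dt$ tends to $0$. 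Because $y\in D(A)$ makes $t\mapsto\|T(t)y\|^2$ bounded and uniformly continuous (its derivative is bounded by $2K^2\|Ay\|\,\|y\|$), a Karamata-type Tauberian argument converts Abel-mean decay into $\|T(t)y\|\to 0$; this step is delicate, since bare Abel-mean decay of a bounded, uniformly continuous function does not imply pointwise decay (sparse high bumps can survive), and the quantitative $\sqrt{\alpha}$ rate is precisely what is needed to rule out such obstructions. Combining with the density reduction of the first step then delivers strong stability on all of $X$.
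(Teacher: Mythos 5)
You should first note that the paper does not prove this statement at all: Theorem \ref{CT} is quoted as a black box from Chill--Tomilov \cite[Theorem 8.4(i)]{CT} (resting on the resolvent approach of \cite{Tomilov_2}), so there is no in-paper argument to compare against. Judged on its own merits, your proposal has the right skeleton --- the density reduction is sound (the resolvent identity does transfer the hypothesis to $R(\lambda_0;A)M$, which is dense), and the Plancherel identity tying the Abel mean of $\|T(t)y\|^2$ to $\int_{\mathbb{R}}\|\sqrt{\alpha}\,R(\alpha+i\beta;A)y\|^2\,d\beta$ is indeed the standard entry point for Tomilov's integral criterion --- but it does not close, for two reasons. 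First, a local error: the tail bound $\|\sqrt{\alpha}\,R(\alpha+i\beta;A)y\|^2\le C\alpha(\|y\|^2+\|Ay\|^2)/(\alpha^2+\beta^2)$ does not follow from $R(\lambda)y=\lambda^{-1}[y+R(\lambda)Ay]$ and $\|R(\lambda)\|\le K/\alpha$; that route produces a term of size $K^2\alpha^{-1}\|Ay\|^2/(\alpha^2+\beta^2)$, which blows up as $\alpha\downarrow0$. The tail is still salvageable for $y\in D(A)$ (use $i\beta R(\lambda)y=y+R(\lambda)(A-\alpha)y$ together with $\int_{\mathbb{R}}\|R(\alpha+i\cdot)z\|^2\,d\beta\le\pi K^2\|z\|^2/\alpha$ to get $\alpha\int_{|\beta|>R}\|R(\alpha+i\beta)y\|^2\,d\beta=O(R^{-1})$ uniformly in $\alpha$), but not by the inequality you wrote.

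Second, and decisively, the step you yourself flag as the ``central technical obstacle'' --- a uniform-in-$\alpha$ integrable envelope, or at least uniform integrability, for $\alpha\|R(\alpha+i\beta;A)y\|^2$ on a compact band $\{|\beta|\le R\}$ --- is precisely the mathematical content of the theorem, and the tools you gesture at cannot supply it. Vitali's theorem takes local uniform boundedness as a \emph{hypothesis} and cannot manufacture it from pointwise convergence; a mean-value or subharmonicity argument on discs centred at $\alpha+i\beta$ forces the radius to be $<\alpha$, and with the only available a priori bound it merely reproduces the trivial estimate $\alpha\|R(\alpha+i\beta)y\|^2=O(\alpha^{-1})$. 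Without uniform integrability, nothing in your argument excludes the total mass of $\alpha\|R(\alpha+i\cdot)y\|^2$ (which is bounded by $\pi K^2\|y\|^2$ uniformly in $\alpha$) concentrating near a few frequencies as $\alpha\downarrow0$, which is exactly the phenomenon the theorem must rule out when $\sigma(A)$ accumulates on $i\mathbb{R}$. One smaller remark in your favour: the final Tauberian step is actually the easy one and needs neither Karamata nor the $\sqrt{\alpha}$ rate --- since $\|T(s)y\|\le K\|T(t)y\|$ for $s\ge t$, one has $\limsup_t\|T(t)y\|^2\le K^2\liminf_t\|T(t)y\|^2$, and vanishing of the Abel means already forces the $\liminf$ to be zero; the sparse-bump obstruction you worry about is excluded by the semigroup law. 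The genuine gap is the passage from the pointwise resolvent hypothesis to the integral one.
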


\noindent There are reasons why we use here the resolvent criterion in
\cite{CT}, instead of the more wellknown approaches in \cite{Levan} and \cite{arendt}. For
one, the domain of the flow-structure generator $\mathcal{A}+B$ is not
compactly embedded into $\mathcal{H}_{0},$ and so the classical
stabilizability approach in \cite{Levan} is not possible. In addition, the
spectrum criterion for stability in \cite{arendt} is not applicable here since it
would entail the elimination of all three parts of $\sigma (\mathcal{A}+B);$
particularly, the analysis of residual spectrum will be of great technical
complexity and mathematical challenge since it requires one to prove that $%
i\beta $ $(\beta \neq 0)$ is not an eigenvalue of the adjoint of the
generator $\mathcal{A}+B$. In order to justify our concern related to a
residual spectrum analysis and to see how technical calculations the adjoint
of the generator will require, we provide here the adjoint operator $(%
\mathcal{A}+B)^{\ast }$ (For a detailed proof, the reader is referred to 
\cite[Lemma 13]{sima}):

\begin{lemma}
\label{adj} With reference to problem (\ref{1})-(\ref{3}), the adjoint
operator 
\begin{equation*}
(\mathcal{A}+B)^{\ast }:D((\mathcal{A}+B)^{\ast })\cap \mathcal{H}%
_{0}\subset \mathcal{H}_{0}\rightarrow \mathcal{H}_{0}
\end{equation*}%
of the semigroup generator $\mathcal{A}+B$ is defined as%
\begin{equation*}
(\mathcal{A}+B)^{\ast }=\mathcal{A}^{\ast }+B^{\ast }
\end{equation*}%
\begin{equation*}
=\left[ 
\begin{array}{cccc}
\mathbf{U}\mathbb{\cdot }\nabla (\cdot ) & \text{div}(\cdot ) & 0 & 0 \\ 
\mathbb{\nabla (\cdot )} & \text{div}\sigma (\cdot )-\eta I+\mathbf{U}%
\mathbb{\cdot \nabla (\cdot )-\nabla }\mathbf{U\cdot (\cdot )} & 0 & 0 \\ 
0 & 0 & 0 & -I \\ 
-\left[ \cdot \right] _{\Omega } & -\left[ 2\nu \partial _{x_{3}}(\cdot
)_{3}+\lambda \text{div}(\cdot )\right] _{\Omega } & \Delta ^{2} & 0%
\end{array}%
\right] 
\end{equation*}%
\begin{equation*}
+\left[ 
\begin{array}{cccc}
\text{div}(\mathbf{U)}(\cdot ) & 0 & 0 & 0 \\ 
0 & \text{div}(\mathbf{U)}(\cdot ) & 0 & 0 \\ 
{{\mathring{A}}^{-1}}\left\{ \text{div}{{([U}_{1},U_{2}]{)+\mathbf{U\cdot }%
\nabla )}}\right\} {(\cdot )|}_{\Omega } & {{\mathring{A}}^{-1}\left\{ \text{%
div}{{[U}_{1},U_{2}]{+\mathbf{U\cdot }\nabla }}\right\} }\left[ 2\nu
\partial _{x_{3}}(\cdot )_{3}+\lambda \text{div}(\cdot )\right] _{\Omega } & 
0 & 0 \\ 
0 & 0 & 0 & 0%
\end{array}%
\right] 
\end{equation*}%
\begin{equation*}
+\left[ 
\begin{array}{cccc}
-\text{div}(\mathbf{U)(\cdot )} & 0 & 0 & 0 \\ 
0 & 0 & 0 & 0 \\ 
0 & 0 & -{{\mathring{A}}^{-1}\left\{ (\text{div}{{[U}_{1},U_{2}]{+\mathbf{%
U\cdot }\nabla )}\Delta }^{2}(\cdot )\right\} +}\mathbf{U}\mathbb{\cdot }%
\nabla (\cdot )+\Delta {{\mathring{A}}^{-1}\nabla }^{\ast }(\mathbb{\nabla
\cdot }(\mathbf{U}\mathbb{\cdot }\nabla (\cdot ))\mathbb{)} & 0 \\ 
0 & 0 & 0 & 0%
\end{array}%
\right] 
\end{equation*}%
\begin{equation}
=L_{1}+L_{2}+B^{\ast }  \label{adj-AplusB}
\end{equation}%
Here, $\nabla ^{\ast }\in \mathcal{L}(L^{2}(\Omega ),[H^{1}(\Omega
)]^{^{\prime }})$ is the adjoint of the gradient operator $\nabla \in 
\mathcal{L}(H^{1}(\Omega ),L^{2}(\Omega ))$ and the domain of $(\mathcal{A}%
+B)^{\ast }|_{\mathcal{H}_{0}}$ is given as 
\begin{equation*}
D((\mathcal{A}+B)^{\ast })\cap \mathcal{H}_{0}=\{(p_{0},u_{0},w_{1},w_{2})%
\in L^{2}(\mathcal{O})\times \mathbf{H}^{1}(\mathcal{O})\times
H_{0}^{2}(\Omega )\times L^{2}(\Omega )~:~\text{properties }\mathbf{(A^{\ast
}.i)}\text{--}\mathbf{(A^{\ast }.vii)}~~\text{hold}\},
\end{equation*}%
where
\end{lemma}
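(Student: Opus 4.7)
The plan is to characterize $(\mathcal{A}+B)^{\ast}$ directly from the defining identity
\[
(([\mathcal{A}+B]\varphi,\widetilde{\varphi}))_{\mathcal{H}_{0}} \;=\; ((\varphi,(\mathcal{A}+B)^{\ast}\widetilde{\varphi}))_{\mathcal{H}_{0}},
\]
taken with respect to the re-topologized inner product in (\ref{innpr}), for every $\varphi\in D(\mathcal{A}+B)\cap \mathcal{H}_{0}$ and every sufficiently smooth $\widetilde{\varphi}=[\widetilde{p}_{0},\widetilde{u}_{0},\widetilde{w}_{1},\widetilde{w}_{2}]$. First I would split the computation into the contributions coming from the ``standard'' piece of (\ref{innpr}) (i.e.\ the $L^{2}$- and $H^{2}$-pairings with $\widetilde{p}_{0}$, $\widetilde{u}_{0}$, $\Delta \widetilde{w}_{1}$, $\widetilde{w}_{2}$) and those coming from the Dirichlet-lift/Neumann-auxiliary corrections $-\alpha D(g\cdot\nabla w_{1})e_{3}+\xi\nabla\psi(p_{0},w_{1})$ and $h_{\alpha}\cdot\nabla w_{1}+\xi w_{1}$. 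The ``standard'' contribution is handled by the usual Green's identities on each row of $\mathcal{A}+B$, transferring derivatives onto $\widetilde{\varphi}$; this produces the first matrix $L_{1}$, whose formal entries are obtained by integrating by parts the convective, divergence and stress terms and by the biharmonic self-adjointness of $\Delta^{2}$ on $H_{0}^{2}(\Omega)$.

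Next I would treat the correction terms. Plugging the action of $(\mathcal{A}+B)$ on the components of $\varphi$ into those correction pieces gives integrals involving $D(g\cdot\nabla w_{1})$ and $\nabla\psi(p_{0},w_{1})$. The idea is to use the defining BVPs (\ref{20.1}) and (\ref{18.1})--(\ref{psireg}) to move these operators off of $\varphi$ and onto $\widetilde{\varphi}$: harmonicity of $D(\cdot)$ converts volume pairings with $\nabla D(\cdot)$ into boundary pairings over $\Omega$, and the Neumann problem for $\psi$ converts $(\nabla\psi,\nabla\cdot)_{\mathcal{O}}$ into an $L^{2}$ pairing with $p_{0}$ plus a trace pairing with $w_{1}$. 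After inverting the plate operator $\mathring{A}$ (here $\mathring{A}=\Delta^{2}$ on $H_{0}^{2}(\Omega)$ with Dirichlet BC, as in (\ref{2})) to absorb the corresponding $\Delta^{2}$ factor, one obtains precisely the entries of the second matrix $L_{2}$ and the extra contributions to the last row of $B^{\ast}$, including the $\mathring{A}^{-1}\{\mathrm{div}[U_{1},U_{2}]+\mathbf{U}\cdot\nabla\}$ factors and the $\Delta\mathring{A}^{-1}\nabla^{\ast}$ term that arises from transposing the correction in the $w_{2}$ slot against $\mathbf{U}\cdot\nabla$.

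The domain $D((\mathcal{A}+B)^{\ast})\cap\mathcal{H}_{0}$ is then read off from the requirement that all the boundary integrals generated by integration by parts vanish for every $\varphi\in D(\mathcal{A}+B)\cap\mathcal{H}_{0}$. Testing against appropriate subclasses (those annihilating all but one interface/boundary term at a time, as was done in the construction of (A.i)--(A.vi) in the forward direction) yields the list (A$^{\ast}$.i)--(A$^{\ast}$.vii); here I would use the same Lipschitz-trace and corner regularity results of \cite{Dauge_1,Dauge_2,Dauge_3} invoked in the proof of Theorem \ref{wp} to guarantee that the candidate boundary/interface traces live in the correct Sobolev spaces and that the $TH^{1/2}(\partial\mathcal{O})$ orthogonality in (A.iv) plays symmetrically on the adjoint side.

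The main obstacle, and the reason the adjoint is the sum of \emph{three} operator matrices rather than a single formal transpose, is precisely the bookkeeping of those correction terms: the inner product (\ref{innpr}) mixes the pressure $p_{0}$, the plate displacement $w_{1}$ and the fluid component $u_{0}$ through $D(\cdot)$ and $\psi(\cdot,\cdot)$, so transposing $\mathcal{A}+B$ redistributes entries across rows and couples $p_{0},u_{0},w_{1}$ in ways that have no analogue in the forward operator. Tracking which correction ends up in $L_{2}$ versus $B^{\ast}$ requires using the compatibility condition in (\ref{null-ort}) to stay inside $\mathcal{H}_{0}$, and invoking (\ref{psireg}) together with the regularity of $D$ in (\ref{20.1}) so that $\mathring{A}^{-1}$ is well-defined on the resulting right-hand sides. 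Once every correction is assigned and the boundary terms are matched against the candidate domain conditions, the displayed decomposition $(\mathcal{A}+B)^{\ast}=L_{1}+L_{2}+B^{\ast}$ and the list (A$^{\ast}$.i)--(A$^{\ast}$.vii) follow, and this is precisely the calculation carried out in detail in \cite[Lemma 13]{sima}, to which we defer for the full verification.
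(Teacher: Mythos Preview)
The paper does not actually prove this lemma; it merely states the result and defers to \cite[Lemma~13]{sima} for the computation. Your sketch likewise defers to that reference at the end, so at the level of ``where is the proof'' you match the paper.

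However, the explanatory portion of your sketch rests on a wrong premise. You compute the adjoint with respect to the \emph{special} inner product $((\cdot,\cdot))_{\mathcal{H}_{0}}$ of (\ref{innpr}), and you attribute the matrices $L_{2}$ and $B^{\ast}$---in particular the $\mathring{A}^{-1}$ factors---to the correction terms $-\alpha D(g\cdot\nabla w_{1})e_{3}+\xi\nabla\psi(p_{0},w_{1})$ and $h_{\alpha}\cdot\nabla w_{1}+\xi w_{1}$. But look at the displayed formula for $(\mathcal{A}+B)^{\ast}$: it contains no $D$, no $\psi$, no $\alpha$, no $\xi$, no $h_{\alpha}$, no $g$. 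If the adjoint were taken in the re-topologized inner product, all of those objects would necessarily appear. The adjoint here is with respect to the \emph{standard} inner product (\ref{stand}) on $\mathcal{H}$ (restricted to $\mathcal{H}_{0}$), and the $\mathring{A}^{-1}$ factors arise for a different reason: the third slot of (\ref{stand}) is $(\Delta w_{1},\Delta\widetilde{w}_{1})_{\Omega}$, i.e.\ the $\mathring{A}$-graph inner product on $H_{0}^{2}(\Omega)$, so any term that must be represented in that slot---for instance the cross-couplings coming from $p_{0}|_{\Omega}$, the flux trace $[2\nu\partial_{x_{3}}(u_{0})_{3}+\lambda\,\mathrm{div}\,u_{0}]_{\Omega}$, and the transpose of $\mathbf{U}\cdot\nabla$ acting on $w_{1}$---picks up an $\mathring{A}^{-1}$ when you solve for the Riesz representative in $H_{0}^{2}(\Omega)$. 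That is the bookkeeping carried out in \cite{sima}, and it is what produces $L_{2}$ and the complicated $(3,3)$ entry of $B^{\ast}$.

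So your outline of how $L_{1}$ arises (formal integration by parts, sign flips on the convective terms, biharmonic self-adjointness) is fine, and your reading of the domain conditions $(A^{\ast}.i)$--$(A^{\ast}.vii)$ as coming from vanishing boundary terms is correct in spirit; but the mechanism you propose for $L_{2}$ and $B^{\ast}$ would not work, and if carried out would yield a different operator than the one stated.
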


\begin{enumerate}
\item $\mathbf{(A^{\ast }.i)}$ $\mathbf{U}\cdot \nabla p_{0}\in L^{2}(%
\mathcal{O})$

\item $\mathbf{(A^{\ast }.ii)}$ $\text{div}~\sigma (u_{0})+\nabla p_{0}\in 
\mathbf{L}^{2}(\mathcal{O})$ (So, $\left[ \sigma (u_{0})\mathbf{n}+p_{0}%
\mathbf{n}\right] _{\partial \mathcal{O}}\in \mathbf{H}^{-\frac{1}{2}%
}(\partial \mathcal{O})$)

\item $\mathbf{(A^{\ast }.iii)}$ $\Delta ^{2}w_{1}-\left[ 2\nu \partial
_{x_{3}}(u_{0})_{3}+\lambda \text{div}(u_{0})\right] _{\Omega
}-p_{0}|_{\Omega }\in L^{2}(\Omega )$

\item $\mathbf{(A^{\ast }.iv)}$ $\left( \sigma (u_{0})\mathbf{n}+p_{0}%
\mathbf{n}\right) \bot ~TH^{1/2}(\partial \mathcal{O})$. That is, 
\begin{equation*}
\left\langle \sigma (u_{0})\mathbf{n}+p_{0}\mathbf{n},\mathbf{\tau }%
\right\rangle _{\mathbf{H}^{-\frac{1}{2}}(\partial \mathcal{O})\times 
\mathbf{H}^{\frac{1}{2}}(\partial \mathcal{O})}=0\text{ \ in }\mathcal{D}%
^{\prime }(\mathcal{O})\text{\ for every }\mathbf{\tau }\in
TH^{1/2}(\partial \mathcal{O})
\end{equation*}

\item $\mathbf{(A^{\ast }.v)}$ The flow velocity component $u_{0}=\mathbf{f}%
_{0}+\widetilde{\mathbf{f}}_{0}$, where $\mathbf{f}_{0}\in \mathbf{V}_{0}$
and $\widetilde{\mathbf{f}}_{0}\in \mathbf{H}^{1}(\mathcal{O})$ satisfies 
\begin{equation*}
\widetilde{\mathbf{f}}_{0}=%
\begin{cases}
0 & ~\text{ on }~S \\ 
w_{2}\mathbf{n} & ~\text{ on}~\Omega%
\end{cases}%
\end{equation*}%
\noindent (and so $\left. \mathbf{f}_{0}\right\vert _{\partial \mathcal{O}%
}\in TH^{1/2}(\partial \mathcal{O})$)

\item $\mathbf{(A^{\ast }.vi)}$ $[-w_{2}+\mathbf{U}\mathbb{\cdot }\nabla
w_{1}+\Delta {{\mathring{A}}^{-1}\nabla }^{\ast }(\mathbb{\nabla \cdot }(%
\mathbf{U}\mathbb{\cdot }\nabla w_{1})\mathbb{)}]\in H_{0}^{2}(\Omega ),$ \
(and so $w_{2}\in H_{0}^{1}(\Omega )$)

\item $\mathbf{(A^{\ast }.vii)}$ $\int\limits_{\mathcal{O}}[\mathbf{U}\cdot
\nabla p_{0}+$div$~(u_{0})]d\mathcal{O}$\newline
$+\int\limits_{\Omega }{{\mathring{A}}^{-1}}\left\{ (\text{div}{[U}_{1},U_{2}%
{]+\mathbf{U\cdot }\nabla )(\left[ p_{0}+2\nu \partial _{x_{3}}(u_0
)_{3}+\lambda \text{div}(u_{0})\right] _{\Omega })}\right\} d\Omega $\newline
$-\int\limits_{\Omega}{{\mathring{A}}^{-1}\left\{ (\text{div}{{[U}_{1},U_{2}]%
{+\mathbf{U\cdot }\nabla )}\Delta }^{2}{w}_{1}\right\} }d\Omega $\newline
$+\int\limits_{\Omega}[\mathbf{U}\mathbb{\cdot }\nabla w_{1}+\Delta {{%
\mathring{A}}^{-1}\nabla }^{\ast }(\mathbb{\nabla \cdot }(\mathbf{U}\mathbb{%
\cdot }\nabla w_{1})\mathbb{)]}d\Omega $\newline
$=0.$\newline
\end{enumerate}
\noindent Looking at the expression for $\mathcal{A}^{\ast }+B^{\ast }$ in
(\ref{adj-AplusB}), it seems that showing this adjoint operator has empty null space--necessary for
ruling out residual spectrum of $\mathcal{A}+B$--would be a difficult
exercise. Now, our stability result is as follows:  

\begin{theorem}
\label{St} The bounded $C_{0}-$semigroup $\left\{ e^{(\mathcal{A}%
+B)t}\right\} _{t\geq 0}$ given in Theorem \ref{wp} is strongly stable under
the condition that 
\begin{equation*}
\left\Vert \mathbf{U}\right\Vert _{\ast }=\left\Vert \mathbf{U}\right\Vert
_{L^{\infty }(\mathcal{O})}+\left\Vert \text{div}(\mathbf{U)}\right\Vert
_{L^{\infty }(\mathcal{O})}+\left\Vert \mathbf{U}|_{\Omega }\right\Vert
_{C^{2}(\overline{\Omega })}
\end{equation*}%
is small enough . That is, the solution $\phi (t)=\left[ p(t),{u}%
(t),w_{1}(t),w_{2}(t)\right] $ of the PDE system (\ref{1})-(\ref{3}) tends
asymptotically to the zero state for all initial data $\phi _{0}\in \mathcal{%
H}_{0}.$
\end{theorem}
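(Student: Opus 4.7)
The plan is to apply the pointwise resolvent criterion, Theorem \ref{CT}. By Theorem \ref{wp}, $\{e^{(\mathcal{A}+B)t}\}_{t\geq 0}$ is a bounded $C_0$-semigroup on $\mathcal{H}_0$ and, in fact, a contraction with respect to the equivalent norm (\ref{specnorm}); so it suffices to produce a dense set $M\subset \mathcal{H}_0$ such that $\sqrt{\alpha}\,R(\alpha+i\beta;\mathcal{A}+B)x\to 0$ as $\alpha\to 0^{+}$, for every $x\in M$ and every $\beta\in\mathbb{R}$. I will take $M=\mathcal{H}_0$ itself and establish the stronger claim that, for each fixed $\beta\in\mathbb{R}$, the family $\{R(\alpha+i\beta;\mathcal{A}+B)\}_{\alpha>0}$ is \emph{uniformly bounded} in $\mathcal{L}(\mathcal{H}_0)$. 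Any such uniform bound $C(\beta)$ immediately yields $\|\sqrt{\alpha}\,R(\alpha+i\beta;\mathcal{A}+B)x\|\leq \sqrt{\alpha}\,C(\beta)\|x\|\to 0$, and Theorem \ref{CT} concludes.

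To prove this uniform resolvent bound, I argue by contradiction, working in the equivalent norm. Assume there exist $\beta\in\mathbb{R}$ and sequences $\alpha_n>0$, $F_n\in\mathcal{H}_0$ with $\|F_n\|=1$, such that $\|\Phi_n\|\to\infty$, where $\Phi_n:=R(\alpha_n+i\beta;\mathcal{A}+B)F_n$. Since $\|R(\alpha+i\beta;\mathcal{A}+B)\|\leq 1/\alpha$ by contractivity, we must have $\alpha_n\to 0^{+}$. Normalize by setting $\tilde\Phi_n=\Phi_n/\|\Phi_n\|=:(\tilde p_n,\tilde u_n,\tilde w_{1,n},\tilde w_{2,n})$ and $\tilde F_n=F_n/\|\Phi_n\|$, so that $\|\tilde\Phi_n\|=1$, $\|\tilde F_n\|\to 0$, and
\begin{equation}\label{planRes}
\bigl(\alpha_n+i\beta-(\mathcal{A}+B)\bigr)\tilde\Phi_n = \tilde F_n.
\end{equation}
Pairing (\ref{planRes}) with $\tilde\Phi_n$ in the equivalent inner product (\ref{innpr}), taking real parts, and invoking the dissipativity estimate (\ref{ek}) together with Korn's inequality yields, for some $c>0$,
\begin{equation*}
\alpha_n + c\bigl[\|\tilde u_n\|_{\mathbf{H}^1(\mathcal{O})}^{2}+\|\tilde p_n\|_{\mathcal{O}}^{2}+\|\Delta\tilde w_{1,n}\|_{\Omega}^{2}\bigr]\leq \|\tilde F_n\|\,\|\tilde\Phi_n\|=\|\tilde F_n\|\to 0.
\end{equation*}
Consequently $\tilde u_n\to 0$ in $\mathbf{H}^1(\mathcal{O})$, $\tilde p_n\to 0$ in $L^2(\mathcal{O})$, and $\tilde w_{1,n}\to 0$ in $H_0^2(\Omega)$.

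The only component not directly controlled by the dissipation is $\tilde w_{2,n}$; however, the third row of (\ref{planRes}) is exactly the kinematic identity $(\alpha_n+i\beta)\tilde w_{1,n}-\tilde w_{2,n}-\mathbf{U}\cdot\nabla\tilde w_{1,n}=(\tilde F_n)_3$, which \emph{algebraically} expresses $\tilde w_{2,n}$ in terms of $\tilde w_{1,n}$ and data. Hence
\begin{equation*}
\|\tilde w_{2,n}\|_{\Omega}\leq (\alpha_n+|\beta|)\|\tilde w_{1,n}\|_{\Omega}+\|\mathbf{U}\|_{\ast}\|\nabla\tilde w_{1,n}\|_{\Omega}+\|\tilde F_n\|\to 0,
\end{equation*}
for any fixed $\beta$. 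Combining the four convergences, $\tilde\Phi_n\to 0$ in $\mathcal{H}_0$, contradicting $\|\tilde\Phi_n\|=1$. The main obstacle I anticipate is precisely this last step: the variable $\tilde w_{2,n}$ is not controlled by (\ref{ek}), so its decay must be extracted purely from the algebraic structure of the kinematic coupling. This succeeds here because $\mathbf{U}|_{\Omega}\in C^{2}(\overline{\Omega})$ renders the material derivative $\mathbf{U}\cdot\nabla\tilde w_{1,n}$ subordinate to $\|\tilde w_{1,n}\|_{H_0^2(\Omega)}$; without this structural feature the argument would not close, and the Chill--Tomilov hypothesis would require a more elaborate verification.
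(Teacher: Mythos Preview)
Your proof is correct and follows the same overall strategy as the paper: invoke the Chill--Tomilov criterion (Theorem~\ref{CT}) and verify the pointwise resolvent condition by means of the dissipativity estimate (\ref{ek}), which already controls $p_0$, $u_0$, and $w_1$. The only substantive difference lies in how $w_2$ is recovered. You solve for $\tilde w_{2,n}$ algebraically from the third (kinematic) row of the resolvent equation, which produces a bound carrying the factor $|\beta|$; the paper instead uses the interface condition $w_2 = u_0\cdot\mathbf{n}|_{\Omega} - \mathbf{U}\cdot\nabla w_1$ together with the Sobolev trace theorem, obtaining a constant that is independent of $\beta$ and in fact a uniform resolvent bound $\|\phi\|_{\mathcal{H}_0}\leq C_{\xi,\mathbf{U}}\sqrt{|\text{Re}((\phi^*,\phi))_{\mathcal{H}_0}|}$ on the entire right half-plane. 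Both routes suffice for Theorem~\ref{CT}, but the paper's gives a slightly stronger intermediate conclusion. The paper also argues directly rather than by contradiction, passing from $\|\phi\|\leq C\|\phi^*\|^{1/2}\|\phi\|^{1/2}$ to $\|\phi\|\leq C^2\|\phi^*\|$ in one step; your normalization--contradiction packaging is equivalent but a bit longer.
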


\begin{proof}
The proof relies on the pointwise resolvent criterion given in Theorem \ref%
{CT}, and hence to obtain the strong stability result it will be enough to
show that the resolvent operator 
\begin{equation*}
R(a+ib;[\mathcal{A}+B])=((a+ib)I-[\mathcal{A}+B])^{-1}
\end{equation*}%
obeys the limit estimate:%
\begin{equation}
\underset{a\rightarrow 0^{+}}{\lim }\left\Vert \left\vert \sqrt{a}R(a+ib;[%
\mathcal{A}+B])\phi ^{\ast }\right\vert \right\Vert _{\mathcal{H}_{0}}=0
\label{R}
\end{equation}%
where $a>0$, $b\in 
\mathbb{R}
$ and given $\phi ^{\ast }\in \mathcal{H}_{0}.$ Here, we invoke the special
inner product $((\cdot ,\cdot ))_{\mathcal{H}_{0}}$ (defined in (\ref{innpr}%
)) to get the necessary estimates but since the norms $||\cdot ||_{\mathcal{H%
}}$ and $||\cdot ||_{\mathcal{H}_{0}}$ are equivalent, we obtain the strong
stability with respect to the standard inner product as well. Let 
\begin{equation*}
\phi =\left[ 
\begin{array}{c}
p_{0} \\ 
u_{0} \\ 
w_{1} \\ 
w_{2}%
\end{array}%
\right] =R(a+ib;[\mathcal{A}+B])\phi ^{\ast }\in D(\mathcal{A}+B)\cap 
\mathcal{H}_{0},\text{ \ \ \ \ \ \ }\phi ^{\ast }=\left[ 
\begin{array}{c}
p_{0}^{\ast } \\ 
u_{0}^{\ast } \\ 
w_{1}^{\ast } \\ 
w_{2}^{\ast }%
\end{array}%
\right] \in \mathcal{H}_{0}
\end{equation*}%
Then $\phi $ solves the following static PDE system:%
\begin{align}
& \left\{ 
\begin{array}{l}
(a+ib)p_{0}+\mathbf{U}\cdot \nabla p_{0}+\text{div}~u_{0}\mathbf{+}\text{div}%
(\mathbf{U)}p_{0}=p_{0}^{\ast }~\text{ in }~\mathcal{O} \\ 
(a+ib)u_{0}+\mathbf{U}\cdot \nabla u_{0}+\nabla \mathbf{U\cdot u}_{0}-\text{%
div}\sigma (u_{0})+\eta u_{0}+\nabla p_{0}=u_{0}^{\ast }~\text{ in }~%
\mathcal{O} \\ 
(\sigma (u_{0})\mathbf{n}-p_{0}\mathbf{n})\cdot \boldsymbol{\tau }=0~\text{
on }~\partial \mathcal{O} \\ 
u_{0}\cdot \mathbf{n}=0~\text{ on }~S \\ 
u_{0}\cdot \mathbf{n}=w_{2}+\mathbf{U}\cdot \nabla w_{1}\text{ \ \ on }%
~\Omega \text{ }%
\end{array}%
\right.   \label{bir} \\
& \left\{ 
\begin{array}{l}
(a+ib)w_{1}-w_{2}-\mathbf{U}\cdot \nabla u_{0}=w_{1}^{\ast } \\ 
(a+ib)w_{1}+\Delta ^{2}w_{1}+\left[ 2\nu \partial
_{x_{3}}(u_{0})_{3}+\lambda \text{div}(u_{0})-p_{0}\right] _{\Omega
}=w_{2}^{\ast }~\text{ on }~\Omega  \\ 
w_{1}=\frac{\partial w_{1}}{\partial \nu }=0~\text{ on }~\partial \Omega 
\end{array}%
\right.   \label{iki}
\end{align}%
We invoke the dissipativity estimate (\ref{ek}) in Theorem \ref{wp} (i):%
\begin{equation*}
(a+ib)\left\Vert \left\vert \phi \right\vert \right\Vert _{\mathcal{H}%
_{0}}^{2}-\left( \left( [\mathcal{A}+B])\phi ,\phi \right) \right) _{%
\mathcal{H}_{0}}=\left( \left( \phi ^{\ast },\phi \right) \right) _{\mathcal{%
H}_{0}},
\end{equation*}%
or%
\begin{equation*}
a\left\Vert \left\vert \phi \right\vert \right\Vert _{\mathcal{H}_{0}}^{2}-%
\text{Re}\left( \left( [\mathcal{A}+B])\phi ,\phi \right) \right) _{\mathcal{%
H}_{0}}=\text{Re}\left( \left( \phi ^{\ast },\phi \right) \right) _{\mathcal{%
H}_{0}}
\end{equation*}%
which gives%
\begin{equation}
\left( \frac{1}{4}-C_{\delta }r_{\mathbf{U}}\right) \left[ (\sigma
(u_{0}),\epsilon (u_{0}))_{\mathcal{O}}+\left\Vert u_{0}\right\Vert _{%
\mathcal{O}}^{2}\right] +\left( \frac{1}{2}-\delta C^{\ast }\right) \xi %
\left[ \left\Vert p_{0}\right\Vert _{\mathcal{O}}^{2}+\left\Vert \Delta
w_{1}\right\Vert _{\Omega }^{2}\right] \leq \left\vert \text{Re}\left(
\left( \phi ^{\ast },\phi \right) \right) _{\mathcal{H}_{0}}\right\vert .
\label{uc}
\end{equation}%
In turn, from the boundary condition $w_{2}=u_{0}\cdot \mathbf{n}-\mathbf{U}%
\cdot \nabla w_{1}$ and (\ref{uc}), we get%
\begin{equation}
\left\Vert w_{2}\right\Vert _{\Omega }\leq C_{\xi ,\mathbf{U}}\sqrt{%
\left\vert \text{Re}\left( \left( \phi ^{\ast },\phi \right) \right) _{%
\mathcal{H}_{0}}\right\vert }  \label{4}
\end{equation}%
where we have also used implicitly the Sobolev Trace Theorem. Now, combining
(\ref{uc}) and (\ref{4}), and using the equivalence of norms $\left\Vert
\left\vert \cdot \right\vert \right\Vert _{\mathcal{H}_{0}}$ and $\left\Vert
\cdot \right\Vert _{\mathcal{H}}$ on $\mathcal{H}_{0},$ we have then 
\begin{equation*}
\left\Vert \left\vert \phi \right\vert \right\Vert _{\mathcal{H}_{0}}\leq
C_{\xi ,\mathbf{U}}\sqrt{\left\vert \text{Re}\left( \left( \phi ^{\ast
},\phi \right) \right) _{\mathcal{H}_{0}}\right\vert }.
\end{equation*}%
Scaling the above inequality by $\sqrt{a},$ followed by Young's Inequality
gives now%
\begin{eqnarray*}
\sqrt{a}\left\Vert \left\vert \phi \right\vert \right\Vert _{\mathcal{H}%
_{0}} &\leq &\sqrt{a}C_{\xi ,\mathbf{U}}\left\Vert \left\vert \phi ^{\ast
}\right\vert \right\Vert _{\mathcal{H}_{0}}^{\frac{1}{2}}\left\Vert
\left\vert \phi \right\vert \right\Vert _{\mathcal{H}_{0}}^{\frac{1}{2}} \\
&\leq &\frac{\sqrt{a}}{2}C_{\xi ,\mathbf{U}}^{2}\left\Vert \left\vert \phi
^{\ast }\right\vert \right\Vert _{\mathcal{H}_{0}}+\frac{\sqrt{a}}{2}%
\left\Vert \left\vert \phi \right\vert \right\Vert _{\mathcal{H}_{0}}.
\end{eqnarray*}%
From the last inequality, we have%
\begin{equation*}
\frac{\sqrt{a}}{2}\left\Vert \left\vert \phi \right\vert \right\Vert _{%
\mathcal{H}_{0}}\leq \frac{\sqrt{a}}{2}C_{\xi ,\mathbf{U}}^{2}\left\Vert
\left\vert \phi ^{\ast }\right\vert \right\Vert _{\mathcal{H}_{0}},
\end{equation*}%
and so 
\begin{equation*}
\underset{a\rightarrow 0^{+}}{\lim }\sqrt{a}\left\Vert \left\vert \phi
\right\vert \right\Vert _{\mathcal{H}_{0}}=0.
\end{equation*}%
This gives the desired limit estimate (\ref{R}) and concludes the proof of
Theorem \ref{St}.
\end{proof}

\section{Appendix}
In this paper, one of the key points used to analyze the stability properties of the PDE system (\ref{1})-(\ref{3}) is the dissipativity estimate (\ref{ek}) in the inherently invariant subspace of the finite energy space. To show this, we appeal to the dissipativity result given in \cite{sima} for an operator which is closely associated our generator $\mathcal{A}+B$. For the readers convenience, we also provide its proof.

\begin{lemma} \label{diss}
Let $\mathcal{A}_0$ be the operator defined in (\ref{A-0}). Then, with reference to problem (\ref{1})-(\ref{3}), the semigroup
generator $(\mathcal{A}_0+B):D(\mathcal{A}_0+B)\cap \mathcal{H}_{0}\subset
\mathcal{H}_{0}\rightarrow \mathcal{H}_{0}$ is dissipative with respect to inner
product $((\cdot ,\cdot ))_{\mathcal{H}_{0}}$ for $\left\Vert \mathbf{U}\right\Vert _{\ast }=\left\Vert \mathbf{U}\right\Vert
_{L^{\infty }(\mathcal{O})}+\left\Vert \text{div}(\mathbf{U)}\right\Vert
_{L^{\infty }(\mathcal{O})}+\left\Vert \mathbf{U}|_{\Omega }\right\Vert
_{C^{2}(\overline{\Omega })}$ small enough. In
particular, for $\varphi =\left[ p_{0},u_{0},w_{1},w_{2}\right] \in D(%
\mathcal{A}_0+B)\cap \mathcal{H}_{0},$ 
\begin{equation}
\text{Re}(([\mathcal{A}_0+B]\varphi ,\varphi ))_{\mathcal{H}_{0}}\leq -\frac{%
(\sigma (u_{0}),\epsilon (u_{0}))_{\mathcal{O}}}{4}-\frac{\eta \left\Vert
u_{0}\right\Vert _{\mathcal{O}}^{2}}{4}-\frac{\xi \left\Vert
p_{0}\right\Vert _{\mathcal{O}}^{2}}{2}-\frac{\xi \left\Vert \Delta
w_{1}\right\Vert _{\Omega }^{2}}{2},  \label{dissest}
\end{equation}%
where 
$\xi$ is specified in (\ref{33}).

\end{lemma}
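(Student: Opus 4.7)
The plan is to compute $\text{Re}(([\mathcal{A}_0+B]\varphi, \varphi))_{\mathcal{H}_0}$ directly from the definition (\ref{innpr}), organized according to the four components of $(\mathcal{A}_0+B)\varphi$. Writing those components as $[P, V, Q, Z]$ where $P = -\mathbf{U}\cdot\nabla p_0 - \text{div}(u_0) - \text{div}(\mathbf{U})p_0$, $V = \text{div}\,\sigma(u_0) - \eta u_0 - \nabla p_0 - \mathbf{U}\cdot\nabla u_0$, $Q = w_2 + \mathbf{U}\cdot\nabla w_1$, and $Z = [p_0 - 2\nu\partial_{x_3}(u_0)_3 - \lambda\text{div}(u_0)]_{\Omega} - \Delta^2 w_1$, the inner product splits into four blocks that are processed separately via Green's identities and the boundary conditions of (\ref{1})--(\ref{2}) (in particular $u_0\cdot\mathbf{n}=w_2+\mathbf{U}\cdot\nabla w_1$ on $\Omega$, $u_0\cdot\mathbf{n}=0$ on $S$, the tangential stress-free condition, and $\mathbf{U}\cdot\mathbf{n}=0$ on $\partial\mathcal{O}$). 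The baseline terms $(V, u_0)_{\mathcal{O}} + (\Delta Q, \Delta w_1)_\Omega + (Z, w_2)_\Omega$ produce, via Korn's inequality and the skew-symmetry of $\mathbf{U}\cdot\nabla$ modulo $\text{div}(\mathbf{U})$, the basic dissipative structure $-\tfrac12(\sigma(u_0), \epsilon(u_0))_{\mathcal{O}} - \eta\|u_0\|^2_{\mathcal{O}}$. The interface trace $[p_0 - 2\nu\partial_{x_3}(u_0)_3 - \lambda\text{div}(u_0)]_\Omega$ that arises from $(V, u_0)_{\mathcal{O}}$ couples with $Z$ through the matching BC and cancels up to a single non-dissipative residual of the form $[\cdot]_\Omega(\mathbf{U}\cdot\nabla w_1)$.

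The three multipliers appearing in (\ref{innpr}) are engineered to neutralize this residual and simultaneously generate pressure/bending coercivity. First, pairing $\xi\nabla\psi(p_0, w_1)$ against $V$ (and $\xi\nabla\psi(P, Q)$ against $u_0$) and invoking Green's identity for the Neumann BVP (\ref{18.1}) under the compatibility constraint that defines $\mathcal{H}_0$, converts the $-\text{div}(\mathbf{U})p_0$ part of $B$ together with the $\mathbf{U}\cdot\nabla p_0$ and $\mathbf{U}\cdot\nabla w_1$ contributions into the coercive pair $-\xi\|p_0\|_{\mathcal{O}}^2 - \xi\|\Delta w_1\|_\Omega^2$, with error controlled by the $H^{3/2}$-bound (\ref{psireg}). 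Second, the Dirichlet-lifted multiplier $-\alpha D(g\cdot\nabla w_1)e_3$, with $\alpha = 2\|\mathbf{U}\|_*$ and $g$ extending $\mathbf{n}|_\Omega = e_3$, has normal trace chosen to cancel precisely the problematic material-derivative surface integral on $\Omega$, at the cost of interior errors bounded by $\|\mathbf{U}\|_*(\|u_0\|^2_{H^1} + \|\Delta w_1\|^2_\Omega)$ via (\ref{15.4}). Third, the plate-velocity shift $h_\alpha\cdot\nabla w_1 + \xi w_1 = (\mathbf{U}|_\Omega - \alpha g)\cdot\nabla w_1 + \xi w_1$ turns the cross contribution $(\mathbf{U}\cdot\nabla w_1, h_\alpha\cdot\nabla w_1 + \xi w_1)_\Omega$ into a coercive quadratic in $\nabla w_1$ that is absorbed under $\|\Delta w_1\|_\Omega^2$ via the embedding $H^2(\Omega)\hookrightarrow H^1(\Omega)$.

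The main obstacle is the quantitative calibration of $\xi$ and $\alpha$. Once all cross terms are estimated using (\ref{psireg}), (\ref{15.4}), the Sobolev trace theorem, and Korn's inequality, the accumulated inequality takes the schematic form
\begin{equation*}
\text{Re}(([\mathcal{A}_0+B]\varphi, \varphi))_{\mathcal{H}_0} \leq -\tfrac{1}{4}(\sigma(u_0), \epsilon(u_0))_{\mathcal{O}} - \tfrac{\eta}{4}\|u_0\|^2_{\mathcal{O}} - \xi\bigl[\tfrac{1}{2} - C_2 r_{\mathbf{U}} - (C_1 + C_2 r_{\mathbf{U}})\xi\bigr](\|p_0\|^2_{\mathcal{O}} + \|\Delta w_1\|^2_\Omega),
\end{equation*}
and the bracketed factor is forced to be $\geq \tfrac12$ by choosing $\xi$ as the smaller positive root of the quadratic $(C_1 + C_2 r_{\mathbf{U}})\xi^2 - (\tfrac12 - C_2 r_{\mathbf{U}})\xi + C_2 r_{\mathbf{U}} \leq 0$, which is exactly the value prescribed in (\ref{r-u}). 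Existence of such a root (non-negative discriminant) is precisely what forces the smallness hypothesis on $\|\mathbf{U}\|_*$, since $r_{\mathbf{U}}\to 0$ as $\|\mathbf{U}\|_*\to 0$. The most delicate single estimate is the cross term $(\xi\nabla\psi(P, Q), u_0)_{\mathcal{O}}$, in which both $P$ and $Q$ carry the worst derivatives $\mathbf{U}\cdot\nabla p_0$ and $\mathbf{U}\cdot\nabla w_1$; justifying this bound rests on the Neumann elliptic regularity (\ref{psireg}) on the nonsmooth polyhedral geometry satisfying conditions (\ref{g1})--(\ref{g2}).
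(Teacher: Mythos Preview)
Your overall architecture is right and matches the paper's: expand $(([\mathcal{A}_0+B]\varphi,\varphi))_{\mathcal{H}_0}$ from (\ref{innpr}), integrate by parts using the interface conditions, and then estimate the resulting cross terms so that the choice of $\xi$ in (\ref{r-u}) yields (\ref{dissest}). However, there is a genuine gap in your account of the role of $g$, $\alpha$, and the plate multiplier $h_\alpha\cdot\nabla w_1$.

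You write that $g$ extends $\mathbf{n}|_\Omega=e_3$ and that the Dirichlet lift $-\alpha D(g\cdot\nabla w_1)e_3$ is there to cancel ``the problematic material-derivative surface integral on $\Omega$.'' In the paper, $g$ is a $C^2$ extension of the \emph{in-plane} unit normal $\nu$ to $\partial\Omega$, not of $e_3$, and the purpose of $\alpha=2\|\mathbf{U}\|_*$ is tied to the biharmonic flux-multiplier computation, not to a surface integral on $\Omega$. Concretely, the fourth block of (\ref{innpr}) produces $-(\Delta^2 w_1,\,h_\alpha\cdot\nabla w_1)_\Omega$ and the third block produces $(\Delta(\mathbf{U}\cdot\nabla w_1),\,\Delta w_1)_\Omega$; these are the highest-order non-dissipative contributions, and your proposal does not address them. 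In the paper they are handled by a commutator/flux-multiplier argument (the paper's term $I_6$): using the clamped conditions $w_1=\partial_\nu w_1=0$ on $\partial\Omega$ one obtains
\[
-\text{Re}(\Delta^2 w_1,\,h_\alpha\cdot\nabla w_1)_\Omega
=\tfrac12\!\int_{\partial\Omega}(h_\alpha\cdot\nu)\,|\Delta w_1|^2\,d\partial\Omega
+\tfrac12\!\int_\Omega \text{div}(h_\alpha)\,|\Delta w_1|^2\,d\Omega
-\text{Re}(\Delta w_1,[\Delta,h_\alpha\cdot\nabla]w_1)_\Omega,
\]
and it is precisely because $g|_{\partial\Omega}=\nu$ that $h_\alpha\cdot\nu=\mathbf{U}\cdot\nu-\alpha$, so the choice $\alpha=2\|\mathbf{U}\|_*$ makes the $\partial\Omega$-boundary integral nonpositive. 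Without this mechanism the trace $\|\Delta w_1\|_{L^2(\partial\Omega)}^2$ cannot be absorbed, and the estimate fails. The companion term $(\Delta(\mathbf{U}\cdot\nabla w_1),\Delta w_1)_\Omega$ is handled the same way and contributes the remaining $\tfrac12\int_{\partial\Omega}(\mathbf{U}\cdot\nu)|\Delta w_1|^2$.

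A secondary point: your schematic final inequality omits the $\xi$-independent error $C_2 r_{\mathbf{U}}(\|p_0\|_{\mathcal{O}}^2+\|\Delta w_1\|_\Omega^2)$ that comes from terms like $I_1$ and $I_6$, which is why the paper must choose $\xi$ as a root of the full quadratic $(C_1+C_2 r_{\mathbf{U}})\xi^2+(C_2 r_{\mathbf{U}}-\tfrac12)\xi+C_2 r_{\mathbf{U}}=0$ rather than merely make a bracketed linear factor nonnegative. Once you incorporate the flux-multiplier step for $I_6$ (and adjust $g$ to extend $\nu$), the rest of your outline coincides with the paper's proof.
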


\begin{proof}
Given $\varphi =\left[ p_{0},u_{0},w_{1},w_{2}\right] \in D(\mathcal{A}_0%
+B)\cap \mathcal{H}_{0},$ we have 
\begin{equation*}
(([\mathcal{A}_0+B]\varphi ,\varphi ))_{\mathcal{H}_{0}}=(-\mathbf{U}\nabla
p_{0}-\text{div}(u_{0})-\text{div}(\mathbf{U})p_{0},p_{0})_{\mathcal{O}}
\end{equation*}%
\begin{equation*}
+(-\nabla p_{0}+\text{div}\sigma (u_{0})-\eta u_{0}-\mathbf{U}\nabla
u_{0},u_{0}-\alpha D(g\cdot \nabla w_{1})e_{3})_{\mathcal{O}}
\end{equation*}%
\begin{equation*}
+(-\nabla p_{0}+\text{div}\sigma (u_{0})-\eta u_{0}-\mathbf{U}\nabla
u_{0},\xi \nabla \psi (p_{0},w_{1}))_{\mathcal{O}}
\end{equation*}%
\begin{equation*}
-\alpha (D(g\cdot \nabla \lbrack w_{2}+\mathbf{U}\nabla
w_{1}])e_{3},u_{0}-\alpha D(g\cdot \nabla w_{1})e_{3}+\xi \nabla \psi
(p_{0},w_{1}))_{\mathcal{O}}
\end{equation*}%
\begin{equation*}
+\xi (\nabla \psi (-\mathbf{U}\nabla p_{0}-\text{div}(u_{0})-\text{div}(%
\mathbf{U})p_{0},w_{2}+\mathbf{U}\nabla w_{1}),u_{0}-\alpha D(g\cdot \nabla
w_{1})e_{3})_{\mathcal{O}}
\end{equation*}%
\begin{equation*}
+\xi ^{2}(\nabla \psi (-\mathbf{U}\nabla p_{0}-\text{div}(u_{0})-\text{div}(%
\mathbf{U})p_{0},w_{2}+\mathbf{U}\nabla w_{1}),\nabla \psi (p_{0},w_{1}))_{%
\mathcal{O}}
\end{equation*}%
\begin{equation*}
+(\Delta w_{2},\Delta w_{1})_{\Omega }+(\Delta (\mathbf{U}\nabla
w_{1}),\Delta w_{1})_{\Omega }
\end{equation*}%
\begin{equation*}
+(p_{0}|_{\Omega }-\left[ 2\nu \partial _{x_{3}}(u_{0})_{3}+\lambda \text{div%
}(u_{0})\right] |_{\Omega },w_{2}+h_{\alpha }\cdot \nabla w_{1}+\xi
w_{1})_{\Omega }
\end{equation*}%
\begin{equation*}
+(h_{\alpha }\cdot \nabla \lbrack w_{2}+\mathbf{U}\nabla
w_{1}],w_{2}+h_{\alpha }\cdot \nabla w_{1}+\xi w_{1})_{\Omega }
\end{equation*}%
\begin{equation*}
-(\Delta ^{2}w_{1},w_{2}+h_{\alpha }\cdot \nabla w_{1}+\xi w_{1})_{\Omega }
\end{equation*}%
\begin{equation*}
+\xi (w_{2}+\mathbf{U}\nabla w_{1},w_{2}+h_{\alpha }\cdot \nabla w_{1}+\xi
w_{1})_{\Omega }.
\end{equation*}%
After integration by parts we then arrive at%
\begin{equation*}
(([\mathcal{A}_0+B]\varphi ,\varphi ))_{\mathcal{H}_{0}}=-(\sigma
(u_{0}),\epsilon (u_{0}))_{\mathcal{O}}-\eta \left\Vert u_{0}\right\Vert _{%
\mathcal{O}}^{2}+\frac{1}{2}\int\limits_{\mathcal{O}}\text{div}(\mathbf{U}%
)[|u_{0}|^{2}-|p_{0}|^{2}]d\mathcal{O}
\end{equation*}%
\begin{equation*}
+2i\text{Im}[(p_{0},\text{div}(u_{0}))_{\mathcal{O}}+(\Delta w_{2},\Delta
w_{1})_{\Omega }]-i\text{Im}[(\mathbf{U}\nabla p_{0},p_{0})_{\mathcal{O}}+(%
\mathbf{U}\nabla u_{0},u_{0})_{\mathcal{O}}]
\end{equation*}%
\begin{equation}
+\sum\limits_{j=1}^{8}I_{j},  \label{est}
\end{equation}%
where above the $I_{j}$ are given by:%
\begin{equation*}
I_{1}=(\nabla p_{0}-\text{div}\sigma (u_{0})+\eta u_{0}+\mathbf{U}\nabla
u_{0},\alpha D(g\cdot \nabla w_{1})e_{3})_{\mathcal{O}}
\end{equation*}%
\begin{equation}
-\alpha (p_{0}|_{\Omega }-\left[ 2\nu \partial _{x_{3}}(u_{0})_{3}+\lambda 
\text{div}(u_{0})\right] |_{\Omega },g\cdot \nabla w_{1})_{\Omega },
\label{I1}
\end{equation}%
\begin{equation*}
I_{2}=(-\nabla p_{0}+\text{div}\sigma (u_{0})-\eta u_{0}-\mathbf{U}\nabla
u_{0},\xi \nabla \psi (p_{0},w_{1}))_{\mathcal{O}}-\xi (\Delta
^{2}w_{1},w_{1})_{\Omega }
\end{equation*}%
\begin{equation}
+(p_{0}|_{\Omega }-\left[ 2\nu \partial _{x_{3}}(u_{0})_{3}+\lambda \text{div%
}(u_{0})\right] |_{\Omega },\xi w_{1})_{\Omega },  \label{I2}
\end{equation}%
\begin{equation}
I_{3}=-\alpha (D(g\cdot \nabla \lbrack w_{2}+\mathbf{U}\nabla
w_{1}])e_{3},u_{0}-\alpha D(g\cdot \nabla w_{1})e_{3}+\xi \nabla \psi
(p_{0},w_{1}))_{\mathcal{O}},  \label{I3}
\end{equation}%
\begin{equation}
I_{4}=\xi (\nabla \psi (-\mathbf{U}\nabla p_{0}-\text{div}(u_{0})-\text{div}(%
\mathbf{U})p_{0},w_{2}+\mathbf{U}\nabla w_{1}),u_{0}-\alpha D(g\cdot \nabla
w_{1})e_{3})_{\mathcal{O}},  \label{I4}
\end{equation}%
\begin{equation}
I_{5}=\xi ^{2}(\nabla \psi (-\mathbf{U}\nabla p_{0}-\text{div}(u_{0})-\text{%
div}(\mathbf{U})p_{0},w_{2}+\mathbf{U}\nabla w_{1}),\nabla \psi
(p_{0},w_{1}))_{\mathcal{O}},  \label{I5}
\end{equation}%
\begin{equation}
I_{6}=(\Delta (\mathbf{U}\nabla w_{1}),\Delta w_{1})_{\Omega }-(\Delta
^{2}w_{1},h_{\alpha }\cdot \nabla w_{1})_{\Omega },  \label{I6}
\end{equation}%
\begin{equation}
I_{7}=(h_{\alpha }\cdot \nabla \lbrack w_{2}+\mathbf{U}\nabla
w_{1}],w_{2})_{\Omega },  \label{I7}
\end{equation}%
\begin{equation*}
I_{8}=(h_{\alpha }\cdot \nabla \lbrack w_{2}+\mathbf{U}\nabla
w_{1}],h_{\alpha }\cdot \nabla w_{1}+\xi w_{1})_{\Omega }
\end{equation*}%
\begin{equation}
+\xi (w_{2}+\mathbf{U}\nabla w_{1},w_{2}+h_{\alpha }\cdot \nabla w_{1}+\xi
w_{1})_{\Omega }.  \label{I8}
\end{equation}%
where we also recall the definition $h_{\alpha}=\mathbf{U}|_{\Omega}-\alpha
g.$ In the course of estimating the terms (\ref{I1})-(\ref{I8}) above, we
will invoke the polynomial 
\begin{equation*}
r(a)=a+a^{2}+a^{3}, 
\end{equation*}%
and for the simplicity, we set 
\begin{equation*}
r_{\mathbf{U}}=r(\left\Vert 
\mathbf{U}\right\Vert _{\ast }). \label{rU}
\end{equation*}
We start with $I_{1};$ integrating by parts, we have%
\begin{equation*}
I_{1}=-\alpha (p_{0},\text{div}[D(g\cdot \nabla w_{1})e_{3}])_{\mathcal{O}%
}+\alpha (\sigma (u_{0}),\epsilon (D(g\cdot \nabla w_{1})e_{3})_{\mathcal{O}}
\end{equation*}%
\begin{equation}
+\alpha \eta (u_{0},D(g\cdot \nabla w_{1})e_{3})_{\mathcal{O}}+\alpha (%
\mathbf{U}\nabla u_{0},D(g\cdot \nabla w_{1})e_{3})_{\mathcal{O}}
\label{I1-1}
\end{equation}%
Using the fact that Dirichlet map $D\in L(H_{0}^{\frac{1}{2}+\epsilon
}(\Omega ),H^{1}(\mathcal{O}))$, we have%
\begin{equation}
I_{1}\leq r_{\mathbf{U}}C\left\{ \left\Vert
u_{0}\right\Vert _{H^{1}(\mathcal{O})}^{2}+\left\Vert p_{0}\right\Vert _{%
\mathcal{O}}^{2}+\left\Vert \Delta w_{1}\right\Vert _{\Omega }^{2}\right\}
\label{A}
\end{equation}%
%
%
%
We continue with $I_{2};$ using the definition of the map $\psi (\cdot
,\cdot )$ in (\ref{18.1}) and integrating by parts we get%
\begin{equation*}
I_{2}=-\xi \int\limits_{\mathcal{O}}\left\vert p_{0}\right\vert ^{2}d%
\mathcal{O}-\xi (\sigma (u_{0}),\epsilon (\nabla \psi (p_{0},w_{1})))_{%
\mathcal{O}}
\end{equation*}%
\begin{equation*}
+\xi \left\langle \sigma (u_{0})\mathbf{n}-p_{0}\mathbf{n},(\nabla \psi
(p_{0},w_{1}),\mathbf{n})\mathbf{n}\right\rangle _{\partial \mathcal{O}}-\eta (u_{0},\xi
\nabla \psi (p_{0},w_{1}))_{\mathcal{O}}
\end{equation*}%
\begin{equation*}
(-\mathbf{U}\nabla u_{0},\xi \nabla \psi (p_{0},w_{1}))_{\mathcal{O}%
}-(\Delta ^{2}w_{1},\xi w_{1})_{\Omega }
\end{equation*}%
\begin{equation*}
+(p_{0}|_{\Omega }-\left[ 2\nu \partial _{x_{3}}(u_{0})_{3}+\lambda \text{div%
}(u_{0})\right] |_{\Omega },\xi w_{1})_{\Omega },
\end{equation*}%
whence we obtain%
\begin{equation*}
I_{2}\leq -\xi \left\Vert p_{0}\right\Vert _{\mathcal{O}}^{2}-\xi \left\Vert
\Delta w_{1}\right\Vert _{\Omega }^{2}+\xi r_{\mathbf{U}}C\left\{ \left\Vert u_{0}\right\Vert _{H^{1}(\mathcal{O}%
)}^{2}+\left\Vert p_{0}\right\Vert _{\mathcal{O}}^{2}+\left\Vert \Delta
w_{1}\right\Vert _{\Omega }^{2}\right\}
\end{equation*}%
\begin{equation}
+\xi C\left\{ \left\Vert u_{0}\right\Vert _{H^{1}(\mathcal{O})}\left[
\left\Vert p_{0}\right\Vert _{\mathcal{O}}+\left\Vert \Delta
w_{1}\right\Vert _{\Omega }\right] \right\} .  \label{B}
\end{equation}%
For $I_{3}:$ recalling the boundary condition 
\begin{equation*}
(u_{0})_{3}|_{\Omega }=w_{2}+\mathbf{U}\nabla w_{1},
\end{equation*}%
making use of Lemma 6.1 of \cite{material} and considering the assumptions
made on the geometry, we have%
\begin{equation*}
I_{3}\leq \alpha C\left\Vert g\cdot \nabla (u_{0})_{3}\right\Vert _{H^{-%
\frac{1}{2}}(\Omega )}\left\Vert u_{0}-\alpha D(g\cdot \nabla
w_{1})e_{3}+\xi \nabla \psi (p_{0},w_{1})\right\Vert _{\mathcal{O}}
\end{equation*}%
\begin{equation}
\leq C\left[ r_{\mathbf{U}}\left\{ \left\Vert
u_{0}\right\Vert _{H^{1}(\mathcal{O})}^{2}+\left\Vert \Delta
w_{1}\right\Vert _{\Omega }^{2}\right\} +\xi ^{2}\left\{ \left\Vert
p_{0}\right\Vert _{\mathcal{O}}^{2}+\left\Vert \Delta w_{1}\right\Vert
_{\Omega }^{2}\right\} \right]  \label{C}
\end{equation}%
where we have also implicitly used the Sobolev Embedding Theorem. To
continue with $I_{4}:$%
\begin{equation*}
I_{4}=\xi (\nabla \psi (-\mathbf{U}\nabla p_{0}-\text{div}(\mathbf{U}%
)p_{0},0),u_{0}-\alpha D(g\cdot \nabla w_{1})e_{3})_{\mathcal{O}}
\end{equation*}%
\begin{equation*}
+\xi (\nabla \psi (-\text{div}(u_{0}),u_{0}\cdot \mathbf{n}),u_{0}-\alpha
D(g\cdot \nabla w_{1})e_{3})_{\mathcal{O}}
\end{equation*}%
\begin{equation}
=I_{4a}+I_{4b}  \label{D1}
\end{equation}%
Since $\mathbf{U}\cdot \mathbf{n|}_{\partial \mathcal{O}}\mathbf{=0,}$ we
have that $(\mathbf{U}\nabla p_{0}+$div$(\mathbf{U})p_{0})\in \lbrack H^{1}(%
\mathcal{O})]^{^{\prime }}$ with%
\begin{equation}
\left\Vert \mathbf{U}\nabla p_{0}+\text{div}(\mathbf{U})p_{0}\right\Vert
_{[H^{1}(\mathcal{O})]^{^{\prime }}}\leq C\left\Vert \mathbf{U}\right\Vert
_{\ast }\left\Vert p_{0}\right\Vert _{\mathcal{O}}.  \label{D*}
\end{equation}%
By Lax-Milgram Theorem, we then have%
\begin{equation*}
I_{4a}\leq C\xi \left\Vert \nabla \psi (-\mathbf{U}\nabla p_{0}-\text{div}(%
\mathbf{U})p_{0},0)\right\Vert _{\mathcal{O}}\left\Vert u_{0}-\alpha
D(g\cdot \nabla w_{1})e_{3}\right\Vert _{\mathcal{O}}
\end{equation*}%
\begin{equation}
\leq C\xi r_{\mathbf{U}}\left\{ \left\Vert
u_{0}\right\Vert _{H^{1}(\mathcal{O})}^{2}+\left\Vert p_{0}\right\Vert _{%
\mathcal{O}}^{2}+\left\Vert \Delta w_{1}\right\Vert _{\Omega }^{2}\right\}
\label{D2}
\end{equation}%
and similarly%
\begin{equation}
I_{4b}\leq C\xi r_{\mathbf{U}}\left\{
\left\Vert u_{0}\right\Vert _{H^{1}(\mathcal{O})}^{2}+\left\Vert \Delta
w_{1}\right\Vert _{\Omega }^{2}\right\} .  \label{D3}
\end{equation}%
Now, applying (\ref{D2})-(\ref{D3}) to (\ref{D1}) gives%
\begin{equation}
I_{4}\leq C\xi r_{\mathbf{U}}\left\{
\left\Vert u_{0}\right\Vert _{H^{1}(\mathcal{O})}^{2}+\left\Vert
p_{0}\right\Vert _{\mathcal{O}}^{2}+\left\Vert \Delta w_{1}\right\Vert
_{\Omega }^{2}\right\} .  \label{D}
\end{equation}%
Estimating $I_{5}:$ we proceed as before done for $I_{4}$ and invoke (\ref%
{D*}), Lax Milgram Theorem and the estimate (\ref{psireg}) to have%
\begin{equation}
I_{5}\leq C\xi ^{2}\left[ \left\Vert \mathbf{U}\right\Vert _{\ast }\left\{
\left\Vert p_{0}\right\Vert _{\mathcal{O}}^{2}+\left\Vert \Delta
w_{1}\right\Vert _{\Omega }^{2}\right\} +\left\Vert u_{0}\right\Vert _{H^{1}(%
\mathcal{O})}^{2}\right]  \label{E}
\end{equation}%
For $I_{6},$ in order to estimate the second term in (\ref{I6}), we follow
the standard calculations used for the flux multipliers and the commutator
symbol given by%
\begin{equation}
\lbrack P,Q]f=P(Qf)-Q(Pf)  \label{F*}
\end{equation}%
for the differential operators $P$ and $Q$. Hence,%
\begin{align}
-(\Delta ^{2}w_{1},h_{\alpha}\cdot \nabla w_{1})_{\Omega }=& ~(\nabla \Delta
w_{1},\nabla (h_{\alpha}\cdot \nabla w_{1}))_{\Omega } \\
=& ~-(\Delta w_{1},\Delta (h_{\alpha}\cdot \nabla w_{1}))_{\Omega
}+\int_{\partial \Omega }(h_{\alpha}\cdot \mathbf{\nu })|\Delta
w_{1}|^{2}d\partial \Omega ,
\end{align}%
where, in the first identity we have directly invoked the clamped plate
boundary conditions, and in the second we have used the fact that $%
w_{1}=\partial _{\mathbf{\nu }}w_{1}=0$ on $\partial \Omega $ which yields
that 
\begin{equation*}
\frac{\partial }{\partial \mathbf{\nu }}(h_{\alpha}\cdot \nabla
w_{1})=(h_{\alpha}\cdot \mathbf{\nu })\frac{\partial ^{2}w_{1}}{\partial 
\mathbf{\nu }}=(h_{\alpha}\cdot \mathbf{\nu })(\Delta w_{1}\big|_{\partial
\Omega }).
\end{equation*}
\noindent (See \cite{lagnese} or \cite[p.305]{LT}). Using the commutator
bracket $[\cdot ,\cdot ]$, we can rewrite the latter relation as 
\begin{equation*}
-(\Delta ^{2}w_{1},h_{\alpha}\cdot \nabla w_{1})_{\Omega }=~-(\Delta
w_{1},[\Delta ,h_{\alpha}\cdot \nabla ]w_{1})_{\Omega }-(\Delta
w_{1},h_{\alpha}\cdot \nabla (\Delta w_{1}))_{\Omega }+\int_{\partial \Omega
}(h_{\alpha}\cdot \mathbf{\nu })|\Delta w_{1}|^{2}d\partial \Omega .
\end{equation*}%
With Green's relations once more: 
\begin{align}
-(\Delta ^{2}w_{1},h_{\alpha}\cdot \nabla w_{1})_{\Omega }=& ~-(\Delta
w_{1},[\Delta ,h_{\alpha}\cdot \nabla ]w_{1})_{\Omega }-\frac{1}{2}%
\int_{\partial \Omega }(h_{\alpha}\cdot \mathbf{\nu })|\Delta
w_{1}|^{2}d\partial \Omega  \notag \\
& +\frac{1}{2}\int_{\Omega }\big[\text{div}(h_{\alpha})\big]|\Delta
w_{1}|^{2}d\Omega -i\text{Im}(\Delta w_{1},h_{\alpha}\cdot \nabla (\Delta
w_{1}))_{\Omega }  \notag \\
& +\int_{\partial \Omega }(h_{\alpha}\cdot \mathbf{\nu })|\Delta
w_{1}|^{2}d\partial \Omega .  \label{use1}
\end{align}%
Thus, 
\begin{align}
-(\Delta ^{2}w_{1},h_{\alpha}\cdot \nabla w_{1})_{\Omega }=& ~-(\Delta
w_{1},[\Delta ,h_{\alpha}\cdot \nabla ]w_{1})_{\Omega }+\frac{1}{2}%
\int_{\partial \Omega }(h_{\alpha}\cdot \mathbf{\nu })|\Delta
w_{1}|^{2}d\partial \Omega  \notag \\
& +\frac{1}{2}\int_{\Omega }\big[\text{div}(h_{\alpha})\big]|\Delta
w_{1}|^{2}d\Omega -i\text{Im}(\Delta w_{1},h_{\alpha}\cdot \nabla (\Delta
w_{1})).  \label{use2}
\end{align}%
Since $h_{\alpha}=\mathbf{U}\big|_{\Omega }-\alpha g$, where $g$ is an
extension of $\mathbf{\nu }(\mathbf{x})$, we will have then%
\begin{equation}
-\text{Re}(\Delta ^{2}w_{1},h_{\alpha}\cdot \nabla w_{1})_{\Omega }=~\dfrac{1%
}{2}\int_{\partial \Omega }(\mathbf{U}\cdot \mathbf{\nu }-\alpha )|\Delta
w_{1}|^{2}d\partial \Omega +\dfrac{1}{2}\int_{\Omega }\text{div}%
(h_{\alpha})|\Delta w_{1}|^{2}d\Omega -\text{Re}(\Delta w_{1},[\Delta
,h_{\alpha}\cdot \nabla ]w_{1})_{\Omega }  \label{com1}
\end{equation}%
Since we can explicitly compute the commutator 
\begin{align*}
\lbrack \Delta ,{h_{\alpha}}\cdot \nabla ]w_{1}=& (\Delta h_{1})(\partial
_{x_{1}}w_{1})+2(\partial _{x_{1}}h_{1})(\partial
_{x_{1}}^{2}w_{1})+2(\partial _{x_{2}}h_{2})(\partial
_{x_{2}}^{2}w_{1})+(\Delta h_{2})(\partial _{x_{2}}w_{1}) \\
& +2\text{div}(h_{\alpha})(\partial _{x_{1}}\partial _{x_{2}}w_{1}),
\end{align*}%
and 
\begin{equation}
\big|\big|\lbrack \Delta ,{h_{\alpha}}\cdot \nabla ]w_{1}\big|\big|%
_{L^{2}(\Omega )}\leq r_{\mathbf{U}}||\Delta
w_{1}||_{L^{2}(\Omega )}.  \label{commest}
\end{equation}%
combining (\ref{com1})-(\ref{commest}) we eventually get%
\begin{equation}
-\text{Re}(\Delta ^{2}w_{1},h_{\alpha }\cdot \nabla w_{1})_{\Omega }\leq 
\frac{1}{2}\int\limits_{\partial \Omega }[\mathbf{U\cdot \nu -}\alpha
]\left\vert \Delta w_{1}\right\vert ^{2}d\partial \Omega +Cr_{\mathbf{U}}\left\Vert \Delta w_{1}\right\Vert _{\Omega
}^{2}.  \label{F1}
\end{equation}%
Moreover, for the first term of (\ref{I6}), we have%
\begin{equation*}
(\Delta (\mathbf{U}\nabla w_{1}),\Delta w_{1})_{\Omega }=(\mathbf{U}\nabla
w_{1}),\Delta w_{1})_{\Omega }-([\mathbf{U\cdot }\nabla ,\Delta
]w_{1},\Delta w_{1})_{\Omega }
\end{equation*}%
\begin{equation*}
=\int\limits_{\partial \Omega }(\mathbf{U\cdot \nu })\left\vert \Delta
w_{1}\right\vert ^{2}d\partial \Omega -\int\limits_{\partial \Omega }\text{%
div}(\mathbf{U)}\left\vert \Delta w_{1}\right\vert ^{2}d\partial \Omega
\end{equation*}%
\begin{equation*}
-([\mathbf{U\cdot }\nabla ,\Delta ]w_{1},\Delta w_{1})_{\Omega
}-\int\limits_{\Omega }\Delta w_{1}\mathbf{U\cdot }\nabla (\Delta
w_{1})d\Omega
\end{equation*}%
where we also use the commutator expression in (\ref{F*}). This gives us%
\begin{equation}
\text{Re}(\Delta (\mathbf{U}\nabla w_{1}),\Delta w_{1})_{\Omega }\leq \frac{1%
}{2}\int\limits_{\partial \Omega }(\mathbf{U\cdot \nu })\left\vert \Delta
w_{1}\right\vert ^{2}d\partial \Omega +Cr_{\mathbf{U}}\left\Vert \Delta w_{1}\right\Vert _{\Omega }^{2}.  \label{F2}
\end{equation}%
Now applying (\ref{F1})-(\ref{F2}) to (\ref{I6}), we obtain%
\begin{equation}
\text{Re}I_{6}\leq \int\limits_{\partial \Omega }[\mathbf{U\cdot \nu -}\frac{%
\alpha }{2}]\left\vert \Delta w_{1}\right\vert ^{2}d\partial \Omega
+Cr_{\mathbf{U}}\left\Vert \Delta
w_{1}\right\Vert _{\Omega }^{2}.  \tag{F}
\end{equation}%
To estimate $I_{7}:$ since $w_{2}\in H_{0}^{1}(\Omega ),$ we have 
\begin{equation*}
\text{Re}(h_{\alpha }\cdot \nabla w_{2},w_{2})_{\Omega }=-\frac{1}{2}%
\int\limits_{\Omega }\text{div}(h_{\alpha }\mathbf{)}\left\vert
w_{2}\right\vert ^{2}d\Omega
\end{equation*}%
\begin{equation*}
=-\frac{1}{2}\int\limits_{\Omega }\text{div}(h_{\alpha }\mathbf{)}\left\vert
(u_{0})_{3}-\mathbf{U}\nabla w_{1}\right\vert ^{2}d\Omega
\end{equation*}%
after using the boundary condition in $\mathbf{(A.v)}.$ Applying the last
relation to RHS of (\ref{I7}) and recalling that $h_{\alpha }=\mathbf{U|}%
_{\Omega }-\alpha g,$ we get%
\begin{equation*}
\text{Re}I_{7}=\text{Re}(h_{\alpha }\cdot \nabla w_{2},w_{2})_{\Omega }+%
\text{Re}(h_{\alpha }\cdot \nabla (\mathbf{U}\nabla w_{1}),(u_{0})_{3}-%
\mathbf{U}\nabla w_{1})_{\mathcal{O}}
\end{equation*}%
\begin{equation}
\leq Cr_{\mathbf{U}}\left\{ \left\Vert
u_{0}\right\Vert _{H^{1}(\mathcal{O})}^{2}+\left\Vert \Delta
w_{1}\right\Vert _{\Omega }^{2}\right\}  \label{G}
\end{equation}%
where we also implicitly use Sobolev Trace Theorem. Lastly, for the term $%
I_{8}$, we proceed in a manner similar to that adopted for $I_{7}$ and we
have%
\begin{equation*}
I_{8}=(h_{\alpha }\cdot \nabla (u_{0})_{3},h_{\alpha }\cdot \nabla w_{1}+\xi
w_{1})_{\Omega }
\end{equation*}%
\begin{equation*}
+\xi ((u_{0})_{3},(u_{0})_{3}-\mathbf{U}\cdot \nabla w_{1}+h_{\alpha }\cdot
\nabla w_{1}+\xi w_{1})_{\Omega }
\end{equation*}%
\begin{equation*}
\leq C\left[ r_{\mathbf{U}}+\xi ^{2}\right]
\left\{ \left\Vert u_{0}\right\Vert _{H^{1}(\mathcal{O})}^{2}+\left\Vert
\Delta w_{1}\right\Vert _{\Omega }^{2}\right\}
\end{equation*}%
\begin{equation}
+C\xi \left[ \left\Vert u_{0}\right\Vert _{H^{1}(\mathcal{O}%
)}^{2}+r_{\mathbf{U}}\left\{ \left\Vert
u_{0}\right\Vert _{H^{1}(\mathcal{O})}^{2}+\left\Vert \Delta
w_{1}\right\Vert _{\Omega }^{2}\right\} \right]  \label{H}
\end{equation}%
Now, if we apply (\ref{A})-(\ref{H}) to RHS of (\ref{est}), we obtain%
\begin{equation*}
\text{Re}(([\mathcal{A}_0+B]\varphi ,\varphi ))_{\mathcal{H}_{0}}\leq -(\sigma
(u_{0}),\epsilon (u_{0}))_{\mathcal{O}}-\eta \left\Vert u_{0}\right\Vert _{%
\mathcal{O}}^{2}-\xi \left\Vert p_{0}\right\Vert _{\mathcal{O}}^{2}-\xi
\left\Vert \Delta w_{1}\right\Vert _{\Omega }^{2}
\end{equation*}%
\begin{equation*}
+\int\limits_{\partial \Omega }[\mathbf{U\cdot \nu -}\frac{\alpha }{2}%
]\left\vert \Delta w_{1}\right\vert ^{2}d\partial \Omega
\end{equation*}%
\begin{equation*}
+C\left[ r_{\mathbf{U}}+\xi r_{\mathbf{U}}+\xi ^{2}+\xi \right] \left\Vert
u_{0}\right\Vert _{H^{1}(\mathcal{O})}^{2}
\end{equation*}%
\begin{equation*}
+C\left[ r_{\mathbf{U}}+\xi r_{\mathbf{U}}+\xi ^{2}+\xi ^{2}r_{\mathbf{U}}%
\right] \left\{ \left\Vert p_{0}\right\Vert _{\mathcal{O}}^{2}+\left\Vert
\Delta w_{1}\right\Vert _{\Omega }^{2}\right\}
\end{equation*}%
\begin{equation}
+C\xi \left\Vert u_{0}\right\Vert _{H^{1}(\mathcal{O})}^{2}\left\{
\left\Vert p_{0}\right\Vert _{\mathcal{O}}+\left\Vert \Delta
w_{1}\right\Vert _{\Omega }\right\}.  \label{31}
\end{equation}%
We recall now the value of $\alpha
=2\left\Vert \mathbf{U}\right\Vert _{\ast }$ to get%
\begin{equation*}
\text{Re}(([\mathcal{A}_0+B]\varphi ,\varphi ))_{H_N^{\bot}}\leq -(\sigma
(u_{0}),\epsilon (u_{0}))_{\mathcal{O}}-\eta \left\Vert u_{0}\right\Vert _{%
\mathcal{O}}^{2}-\xi \left\Vert p_{0}\right\Vert _{\mathcal{O}}^{2}-\xi
\left\Vert \Delta w_{1}\right\Vert _{\Omega }^{2}
\end{equation*}%
\begin{equation*}
+\left[ (C_{1}+C_{2}r_{\mathbf{U}})\xi ^{2}+C_{2}r_{\mathbf{U}}\xi +C_{2}r_{%
\mathbf{U}}\right] \left\{ \left\Vert p_{0}\right\Vert _{\mathcal{O}%
}^{2}+\left\Vert \Delta w_{1}\right\Vert _{\Omega }^{2}\right\}
\end{equation*}%
\begin{equation}
+\frac{1}{2}\left\{ (\sigma (u_{0}),\epsilon (u_{0}))_{\mathcal{O}}+\eta
\left\Vert u_{0}\right\Vert _{\mathcal{O}}^{2}\right\}+C_{3}\left[ r_{\mathbf{U}}+\xi r_{\mathbf{U}}+\xi ^{2}+\xi \right]
\left\Vert u_{0}\right\Vert _{H^{1}(\mathcal{O})}^{2}  \label{32}
\end{equation}%
where the positive constants $C_{1},C_{2}$ and $C_{3}$ are obtained with the
application of Holder-Young and Korn's inequalities and $C_{2}$ depends on
the constant in Korn's inequality. We now specify $\xi $ be a zero of the
equation%
\begin{equation*}
(C_{1}+C_{2}r_{\mathbf{U}})\xi ^{2}+(C_{2}r_{\mathbf{U}}-\frac{1}{2})\xi
+C_{2}r_{\mathbf{U}}=0.
\end{equation*}%
Namely, 
\begin{equation}
\xi =\frac{\frac{1}{2}-C_{2}r_{\mathbf{U}}}{2(C_{1}+C_{2}r_{\mathbf{U}})}-%
\frac{\sqrt{(\frac{1}{2}-C_{2}r_{\mathbf{U}})^{2}-4C_{2}(C_{1}+C_{2}r_{%
\mathbf{U}})r_{\mathbf{U}}}}{2(C_{1}+C_{2}r_{\mathbf{U}})}  \label{33}
\end{equation}%
where the radicand is nonnegative for $\left\Vert \mathbf{U}\right\Vert
_{\ast }$ sufficiently small. Then (\ref{32}) becomes%
\begin{equation*}
\text{Re}(([\mathcal{A}_0+B]\varphi ,\varphi ))_{\mathcal{H}_{0}}\leq -\frac{%
(\sigma (u_{0}),\epsilon (u_{0}))_{\mathcal{O}}}{4}-\eta \frac{\left\Vert
u_{0}\right\Vert _{\mathcal{O}}^{2}}{4}-\frac{\xi }{2}\left\Vert
p_{0}\right\Vert _{\mathcal{O}}^{2}-\frac{\xi }{2}\left\Vert \Delta
w_{1}\right\Vert _{\Omega }^{2}
\end{equation*}%
\begin{equation*}
-\frac{(\sigma (u_{0}),\epsilon (u_{0}))_{\mathcal{O}}}{4}-\eta \frac{%
\left\Vert u_{0}\right\Vert _{\mathcal{O}}^{2}}{4}
\end{equation*}%
\begin{equation*}
+C_{K}\left[ r_{\mathbf{U}}+\xi r_{\mathbf{U}}+\xi ^{2}+\xi \right] \left\{
(\sigma (u_{0}),\epsilon (u_{0}))_{\mathcal{O}}+\eta \left\Vert
u_{0}\right\Vert _{\mathcal{O}}^{2}\right\} .
\end{equation*}%
With $\xi $ as prescribed in (\ref{33}), we now have the dissipativity
estimate (\ref{dissest}), for $\left\Vert \mathbf{U}\right\Vert _{\ast }$
small enough. (Here we also implicitly re-use Korn's inequality and $C_{K}$
is the constant there). This concludes the proof of Lemma \ref{diss}.
\end{proof}

\section{Acknowledgement}

\noindent The author would like to thank the National Science Foundation,
and acknowledge her partial funding from NSF Grant DMS-1907823.


\begin{thebibliography}{99}


\bibitem{spectral} Aoyama, R. and Kagei, Y., 2016. Spectral properties of
the semigroup for the linearized compressible Navier-Stokes equation around
a parallel in a cylindrical domain. \emph{Advances in Differential Equations,%
} 21 (3/4), pp. 265--300.

\bibitem{arendt} W. Arendt and C.J.K Batty, (April 1988). Tauberian theorems
and stability of one-parameter semigroups, \emph{Transactions of the
American Mathematical Society}, Vol. 306, Number 2, pp. 837-852.

\bibitem{clark} Avalos, G. and Clark, T., 2014. A Mixed Variational
Formulation for the Wellposedness and Numerical Approximation of a PDE Model
Arising in a 3-D Fluid-Structure Interaction, \emph{Evolution Equations and
Control Theory}, 3(4), pp. 557--578.

\bibitem{dvorak} Avalos, G. and Dvorak, M., 2008. A New Maximality Argument
for a Coupled Fluid-Structure Interaction, with Implications for a
Divergence Free Finite Element Method, \emph{Applicationes Mathematicae},
35(3), pp. 259--280.

\bibitem{george1} Avalos, G. and Bucci, F., 2014. Exponential decay
properties of a mathematical model for a certain flow-structure interaction,
\emph{New Prospects in Direct, Inverse and Control Problems for Evolution
Equations,} Springer International Publishing, pp. 49--78.

\bibitem{george2} Avalos, G. and Bucci, F., 2015. Rational rates of uniform
decay for strong solutions to a flow-structure PDE system, \emph{Journal of
Differential Equations}, 258(12), pp. 4398--4423.

\bibitem{pelin-george} Avalos, G. and Geredeli, P.G., 2020. Exponential
stability of a nondissipative, compressible flow-structure PDE model, \emph{%
Journal of Evolution Equations,} 20, pp. 1--38.

\bibitem{agw} Avalos, G., Geredeli P.G. and Webster J.T., 2018. Semigroup
Well-posedness of A Linearized, Compressible flow with An Elastic Boundary, 
\emph{Discrete and Continuous Dynamical Systems-B,} 23 (3), pp. 1267-1295.

\bibitem{agm} Avalos, G., Geredeli, P.G., Muha B., 2020. Wellposedness,
spectral analysis and asymptotic stability of a multilayered heat-wave-wave
system, \emph{Journal of Diff. Equ.}, 269 (2020), pp. 7129--7156.

\bibitem{material} Avalos G., Geredeli P.G., Webster J.T., 2019. A
Linearized Viscous, Compressible Flow-Plate Interaction with Non-dissipative
Coupling, \emph{Journal of Mathematical Analysis And Applications} Vol. 477
(1), pp. 334--356.

\bibitem{T1} Avalos G. and Triggiani R., 2007. The Coupled PDE System
Arising in Fluid-Structure Interaction, Part I: Explicit Semigroup Generator
and its Spectral Properties, \emph{Contemporary Mathematics}, {440},
pp.15--54.

\bibitem{T2} Avalos G. and Triggiani R., 2009. Semigroup Wellposedness in
The Energy Space of a Parabolic-Hyperbolic Coupled Stokes-Lam\'{e} PDE of
Fluid-Structure Interactions, \emph{Discrete and Continuous Dynamical Systems%
}, 2 (3), pp. 417--447.

\bibitem{ALT} Avalos G., Triggiani R. and Lasiecka I., 2016. Heat-Wave
interaction in 2 or 3 dimensions: optimal decay rates, \emph{Journal of
Mathematical Analysis and Applications, }Vol 437, Issue 2, pp. 782--815.



\bibitem{lorena} Bociu, L., Toundykov, D. and Zolasio, J.P.,
2015. Well-posedness analysis for a linearization of a fluid-elasticity
interaction. \emph{SIAM Journal on Mathematical Analysis}, 47 (3),
pp.1958--2000.

\bibitem{bolotin} Bolotin, V.V., 1963. \emph{Nonconservative problems of the
theory of elastic stability,} Macmillan.

\bibitem{B-L} Boyadzhiev, K. N. and Levan, N. 1995. Strong stability of Hilbert space contraction semigroups, \emph{Stud. Sci. Math. Hung.,} 30, pp. 165--182.

\bibitem{buffa2} Buffa, A., Costabel, M. and Sheen, D., 2002. On traces for $%
\mathbf{H}(\text{curl},\Omega )$ in Lipschitz domains, \emph{Journal of
Mathematical Analysis and Applications}, 276 (2), pp. 845--867.


\bibitem{CT} Chill, R., Tomilov, Y., 2007. Stability of operator semigroups:
Ideas and Results, \emph{Perspectives In Operator Theory Banach Center
Publications}, 75, Institute of Mathematics Polish Academy of Sciences,
Warszawa, pp. 71--109.


\bibitem{clw} Chueshov, I., Lasiecka, I., and Webster, J.; 2014. Flow-Plate
Interactions: Well-posedness and long-time behavior, \emph{Discrete \&
Continuous Dynamical Systems-S} 7 (5), pp. 925-965.

\bibitem{cr} Chueshov, I. and Ryzhkova, I., 2011 (September). Wellposedness
and long time behavior for a class of fluid-plate interaction models, \emph{%
IFIP Conference on System Modeling and Optimization}, Springer, Berlin,
Heidelberg, pp. 328--337.

\bibitem{igor} Chueshov, I., 2014. Dynamics of a nonlinear elastic plate
interacting with a linearized compressible viscous flow, \emph{Nonlinear
Analysis: Theory, Methods \& Applications}, 95, pp. 650--665.

\bibitem{igor2} Chueshov, I., 2014. Interaction of an elastic plate with a
linearized inviscid incompressible fluid, \emph{Communications on Pure \&
Applied Analysis}, 13(5), pp. 1459--1778.







\bibitem{dV} da Veiga, H.B., 1987. Stationary Motions and Incompressible
Limit for Compressible Viscous flows, \emph{Houston Journal of Mathematics,}
Volume 13, No. 4, pp. 527-544.

\bibitem{dauge} Dauge M., January 1989. Stationary Stokes and Navier Stokes
Systems on Two or Three Dimensional Domains with Corners, Part I: Linearized
Equations, \emph{Siam J. Math. Anal.,} Vol 20, No.1.

\bibitem{Dauge_1} Dauge M., 1988. Elliptic Boundary Value Problems on Corner
Domains, Lecture Notes in Mathematics, 1341, Springer-Verlag, New York.

\bibitem{Dauge_2} M. Dauge, Neumann and mixed problems on curvilinear
polyhedral\ Integr. Equat. Oper. Th 15 (1992), pp. 227-261.

\bibitem{Dauge_3} Dauge M., 2008. Regularity and singularities in polyhedral
domains. The case of Laplace and Maxwell equations, Slides d'un mini-cours
de 3 heures, Karlsruhe, 7,
https://perso.univ-rennes1.fr/monique.dauge/publis/Talk-Karlsruhe08.html.


\bibitem{dowell1} E. Dowell, 2004. \emph{A Modern Course in Aeroelasticity}, 
{Kluwer Academic Publishers}.


\bibitem{p1} Geredeli P.G., 2021. A Time Domain Approach for the Exponential
Stability of a Nondissipative Linearized Compressible Flow-Structure PDE
System, \emph{Mathematical Methods in the Applied Sciences,} Vol (44) No 2,
pp. 1326-1342.

\bibitem{sima} Geredeli P.G., 2021. Bounded Semigroup Wellposedness for a
Linearized Compressible Flow Structure PDE Interaction with Material
Derivative, In press, \emph{SIAM Journal on Mathematical Analysis} (https://arxiv.org/pdf/2006.08725.pdf)



\bibitem{JK} Jerison D.S. and Kenig C. E., 1981. The Neumann Problem on
Lipschitz Domains \emph{Bulletin (New Series) of the American Mathematical
Society}, Vol 4 (2).

\bibitem{decay} Kagei, Y., 2015. Decay estimates on solutions of the
linearized compressible Navier-Stokes equation around a parallel flow in a
cylindrical domain, \emph{Kyusha J. Math. }69, pp. 293--343.


\bibitem{Kesevan} Kesavan, S., 1989. \emph{Topics in functional analysis and
applications}.


\bibitem{lagnese}  Lagnese J., 1989. {\em Boundary Stabilization of Thin Plates}, {SIAM}.

\bibitem{LT} Lasiecka I. and Triggiani R., 2000. {\em Control Theory for Partial Differential Equations: Volume 1, Abstract Parabolic Systems: Continuous and Approximation Theories}, Cambridge University Press.

\bibitem{Levan} Levan, N., (September 1978). The Stabilizability Problem: "A Hilbert Space Operator Decomposition Approach”, \emph{IEEE Transactions on Circuits and Systems,} VoL. CAS-25, NO. 9, pp. 721--727.

\bibitem{Mc} McLean W.C.H., 2000. \emph{Strongly elliptic systems and
boundary integral equations}, Cambridge university press.

\bibitem{canic} Muha B. and Canic S., 2013. Existence of a weak solution to
a nonlinear fluid-structure interaction problem modeling the flow of an
incompressible, viscous fluid in a cylinder with deformable walls, \emph{%
Arch. Rat. Mech. Analy.}, 207 (3), pp. 919--968.

\bibitem{necas} Ne\v{c}as J., 2012. \emph{Direct Methods in the Theory of Elliptic
Equations} (translated by Gerard Tronel and Alois Kufner), Springer, New York.




%




\bibitem{Tomilov_2} Tomilov Y., 2001. A resolvent approach to stability of operator semigroups, \emph{J. Operator Theory,} 46, pp. 63--98.

\bibitem{valli} Valli A., 1987. \emph{On the existence of stationary
solutions to compressible Navier-Stokes equations}, \emph{In Annales de
l'IHP Analyse non lin\'eaire,} Vol. 4, No 1, pp. 99-113


\bibitem{Yudo} Yudovic V.I., 1989. \emph{The Linearization Method in Hydrodynamical Stability Theory,} American Mathematical Society, Providence, Rhode Island

\end{thebibliography}
\end{document}